\documentclass[10pt,a4paper]{article}
\usepackage[numbers,sort&compress]{natbib}
\usepackage[T1]{fontenc}
\usepackage[utf8]{inputenc}
\usepackage{authblk}
\usepackage{amsmath,amssymb,amsthm,esint,bm}
\usepackage{mathrsfs}
\usepackage{bookmark}
\usepackage{amsmath}
\usepackage{enumitem}
\allowdisplaybreaks[3]

\newtheorem{definition}{Definition}[section]
\newtheorem{theorem}[definition]{Theorem}
\newtheorem{lemma}[definition]{Lemma}

\newtheorem{corollary}[definition]{Corollary}
\theoremstyle{remark}
\newtheorem{remark}[definition]{Remark}
\numberwithin{equation}{section}

\newcommand{\abs}[1]{\lvert#1\rvert}
\newcommand{\Abs}[1]{\left\lvert#1\right\rvert}

\newcommand{\rn}{{\mathbb{R}^d}}

\allowdisplaybreaks[4]

\title{Higher H\"{o}lder regularity for
	 degenerate fully nonlinear elliptic equations with Hamiltonian terms}

\author[a]{Wentao Huo}
\author[a]{Xiaofeng Jin}
\author[b]{Lingwei Ma}
\author[b]{Zhenqiu Zhang\thanks{Corresponding author.}}
\affil[a]{School of Mathematical Sciences, Nankai University, Tianjin 300071, P.R. China}
\affil[b]{School of Mathematical Sciences and LPMC, Nankai University, Tianjin 300071, P.R. China}
\date{\today}

\usepackage{hyperref}
\begin{document}
	\maketitle
	\footnotetext[1]{E-mail: huowentaoouc@163.com (W. Huo), 1120220040@mail.nankai.edu.cn (X. Jin), malingwei@nankai.edu.cn (L. Ma), zqzhang@nankai.edu.cn (Z. Zhang).}

\begin{abstract}
This paper focuses on a class of fully nonlinear elliptic equations with variable double phase type degeneracy law and Hamiltonian terms. We obtain improved gradient H\"{o}lder regularity results  at points where the Hamiltonian coefficients and source terms vanish. Furthermore, we establish a Schauder-type estimate at local extrema, which is sharp with respect to the vanishing rate of the Hamiltonian coefficient and source term. Our approach adapts compactness and dichotomy arguments to capture the interplay between the degeneracy rate and the growth of the Hamiltonian term.		

Mathematics Subject classification (2020):  35B65; 35J60; 35J70; 35D40.

Keywords: Degenerate elliptic equations; viscosity solution; Hamiltonian terms; higher H\"{o}lder regularity; Schauder-type estimate. \\

\end{abstract}


\section{Introduction}\label{section1}
 Degenerate elliptic operators with Hamiltonian terms often arise in stochastic control as the Hamilton–Jacobi–Bellman equation satisfied by the value function of optimization problems governing degenerate diffusion processes\cite{Fleming,Leoni}. The present paper is devoted to the following fully nonlinear elliptic equations with nonhomogeneous double degeneracy and Hamiltonian terms
\begin{equation}\label{3aazhumodel}
\Phi(Du, x)F(D^2 u, x)+h(x)\abs{Du}^{m} =f(x) \quad  \text{in} \quad B_{1}:=B_{1}(0)\subset \mathbb{R}^{d},
\end{equation}
where $F$ is a uniformly $(\lambda,\Lambda)$-elliptic operator, $\Phi$ satisfies an
appropriate variable degeneracy law, and $f,h\in C(B_{1})\cap L^{\infty}(B_{1})$. We aim to investigate the quantitative effect of the H\"{o}lder-type vanish rate of the source term $f$ and the function $h$ on the regularity estimates. More precisely, let us state the structural assumptions on \eqref{3aazhumodel} that will be used throughout this work:
\begin{enumerate}[label=(\text{A}\arabic{enumi}),ref=\textbf{A}\arabic{enumi}]
	\item \label{A0} \textbf{(Uniform ellipticity)}
	The fully nonlinear operator
	$F:S^{d}\times B_{1}\rightarrow \mathbb{R}$
	is uniformly $(\lambda,\Lambda)$-elliptic in the sense that
	$$\lambda\|N\|\leq F(M+N,x)-F(M,x)\leq \Lambda\|N\|$$
	for some $0<\lambda\leq \Lambda$ and each $M,N\in S^{d}$ with $N\geq 0$. For normalization reasons, we shall assume $F(0,x)=0$ for all $x\in B_{1}$.
	\item \label{A1} \textbf{(H\"{o}lder continuity of the coefficients)}
	There exist constants $C>0$ and $\theta\in(0,1)$ such that
	\begin{equation}\label{lianxu}
		{\rm osc}_{F}(x,y):=\sup\limits_{M\in S^{d}\setminus \{0\}}\frac{\Abs{F(M,x)-F(M,y)}}{\|M\|}\leq C|x-y|^{\theta}
	\end{equation}
	for all $x,y\in B_{1}$. For convenience,
	we denote ${\rm osc}_{F}(x):={\rm osc}_{F}(x,0)$ and
	$$C_{F}:=\inf\left\{C>0:{\rm osc}_{F}(x,y)\leq C|x-y|^{\theta},\;\forall x,y\in B_{1}\right\}.$$
	\item \label{A2} \textbf{(Nonhomogeneous degeneracy)} The function $\Phi:\rn\times B_{1}\rightarrow [0,\infty)$ enjoys an appropriate variable degeneracy law, that is,
	\begin{equation}\label{3atuihuaxing}
		L_{1}	{\mathcal{K}}(|\xi|,x)\leq	{\Phi}(\xi,x) \leq L_{2} {\mathcal{K}}(|\xi|,x)
	\end{equation}
	for constants $0<L_{1}\leq L_{2}<\infty$,
	where
	\begin{equation}\label{3aaatuihuatiaojian2}
		{\mathcal{K}}(|\xi|,x):=|\xi|^{p(x)}+{a}(x)|\xi|^{q(x)},\quad (\xi,x)\in \rn\times B_{1}
	\end{equation}
	with functions $p(\cdot),q(\cdot)\in C(B_{1})$ and a modulating function $a(\cdot)$ fulfilling
	$$0\leq p_{min}\leq p(x)\leq q(x)\leq q_{max}<\infty\quad {\rm and}\quad 0\leq a(\cdot)\in C({B_{1}}).$$
	\item \label{A3} \textbf{(Assumptions on exponent $m$)} $0\leq m\leq 1+p_{min}$.
	\item \label{A4} \textbf{(Regularity of $h$)} The function $h$ is $C^{0,\beta}$ at 0 and $h(0)=0$. That is, we assume that for some $K_{0}>0$ and $\beta\in (0,1)$, there holds
	\begin{equation*}
		|h(x)|\leq K_{0}|x|^{\beta},\quad x\in B_{1}.
	\end{equation*}
    \item \label{A5} \textbf{(Regularity of $f$)} The source term $f$ is $C^{0,\gamma}$ at 0 and $f(0)=0$. That is, we assume that for some $K_{1}>0$ and  $\gamma\in (0,1)$, there holds
    \begin{equation*}
 	|f(x)|\leq K_{1}|x|^{\gamma},\quad x\in B_{1}.
    \end{equation*}
\end{enumerate}

Mathematical models like \eqref{3aazhumodel} are strongly inspired by certain variational integrals of the calculus of variations with nonstandard growth satisfying a sort of double-phase structure \cite{Marcellini1989,Mingione2015,Mingione201522,Zhikov1993,Zhikov1995}. One of the principal features of model \eqref{3aazhumodel} is its interplay between two distinct types of degeneracy rate, in accordance with the values of the modulating function.
For this reason, its diffusion process exhibits a kind of non-uniformly elliptic and doubly degenerate character. 

Over the past decades, equations of the type \eqref{3aazhumodel} have been studied quite extensively, including aspects such as  comparison principle, well-posedness of the Dirichlet problem, Aleksandroff–Bakelman–Pucci estimates, Harnack inequalities, and regularity theory (cf. \cite{Birindelli2004,Birindelli2006,Birindelli2007CPAA,Imbert2011JDE,Junges2010,Silva-Nornberg2021} and the
references therein). Among these, $C^{1,\alpha}$ regularity theory for solutions to \eqref{3aazhumodel} has been a central subject of research, playing a crucial role in the study of various free boundary problems of obstacle type \cite{SilvaDCDS,SilvaRM}, of one-phase Bernoulli type \cite{dasilva2023}, and of singular perturbation type \cite{Bezerra2023,Bezerra2025}.
In this direction, the seminal work \cite{Imbert1} showed that viscosity solutions to $|Du|^{p}F(D^{2}u)=f$ with $p\geq 0$ are locally $C^{1,\alpha}$ and the optimality of its H\"{o}lder exponent for the same problem is shown
in \cite{Ricarte}. Subsequently, this type of local regularity result has been extended to various kinds of degenerate fully nonlinear elliptic equations, we refer to \cite{Birindelli2014ESAIM,Birindelli2015,B-Demengel2016,B-Demengel2019,Bronzi2020,Fili,Silva2020,Fang,Baasandorj,Andrade,Silva2023} and the references therein.
In particular, \cite{Silva2023,Huo2026} established sharp interior $C^{1,\min\left\{\alpha_{0}^{-},\frac{1}{1+q_{max}}\right\}}$ regularity of \eqref{3aazhumodel} via a geometric tangential method and perturbation technique. Here $\alpha_{0}\in (0,1)$
is the H\"{o}lder exponent coming from the Krylov-Safonov regularity for homogeneous equation $F(D^{2}u)=0$ (see \cite[Chapter 5]{Caff1}).

At this point, a natural question arises: can this regularity be improved, at least at some meaningful points? A first observation  comes from  an important example in \cite{Imbert1}. For any $\alpha\in (0,1)$, the function $u(x)=|x|^{1+\alpha}$ satisfies
$$|Du|^{p}\Delta u=(1+\alpha)^{1+p}(d+\alpha-1)|x|^{(1+\alpha)(1+p)-(2+p)}.$$
Letting $f(x)=C|x|^{\gamma}$ with $0<\gamma:=(1+\alpha)(1+p)-(2+p)$, one can see that,
$u$ is $C^{1,\frac{1+\gamma}{1+p}}$ at the origin (a critical point of $u$). This suggests that the known gradient H\"{o}lder regularity of $u$ can be surpassed, at least at some meaningful point. In a very recent paper \cite{Nascimento2025}, Nascimento considered the following fully nonlinear equations with single degeneracy law
\begin{equation*}
	\abs{Du}^{p}F(D^2 u, x) =f(x) \quad  \text{in} \quad B_{1},
\end{equation*}
where $p>0$, $f$ is $C^{0,\gamma}$ at 0 and $f(0)=0$. By means of a geometric approach, it was shown that solution $u$ is of class $C^{1,\min\left\{\alpha_{0}^{-},\frac{1+\gamma}{1+p}\right\}}$ at the origin. Regarding earlier similar results for $p$-degenerate elliptic equations of divergence type $-{\rm div}\left(\mathcal{A}(x,Du)\right)=f$, please refer to \cite{Teixeira2014MathAnn,Teixeira2022ARMA}. Nonetheless, to the best of our knowledge, no such results are yet available concerning fully nonlinear elliptic equations simultaneously involving degeneracy and Hamiltonian terms.

Another heuristic analysis comes from a scaling argument. To be precise, consider a simpler equation
\begin{equation*}
	|Du|^{p}\Delta u+h(x)|Du|^{m}=f(x) \quad {\rm in}\;\, B_{1}.
\end{equation*}
For any $r\in(0,1)$, define $v(x):=u(rx)/r^{\vartheta}$ for $x\in B_{1}$. It is clear that $v$ solves
\begin{equation*}
	|Dv|^{p}\Delta v+\tilde{h}(x)|Dv|^{m}=\tilde{f}(x) \quad {\rm in}\;\, B_{1},
\end{equation*}
where
\begin{equation*}
	\tilde{h}(x)=r^{2+p-m-\vartheta(1+p-m)}h(rx)\quad {\rm and}\quad \tilde{f}(x)=r^{2+p-\vartheta(1+p)}f(rx).
\end{equation*}
Thus, if we select
\begin{equation*}
	0<\vartheta\leq \min\left\{\frac{2+p+\gamma}{1+p},\frac{2+p-m+\beta}{1+p-m}\right\},
\end{equation*}
then it follows from $|h(x)|\leq K_{0} |x|^{\beta}$, $|f(x)|\leq K_{1} |x|^{\gamma}$ and $r\in (0,1)$ that
\begin{equation*}
	\abs{\tilde{h}(x)}\leq K_{0} r^{2+p-m-\vartheta(1+p-m)+\beta}|x|^{\beta}\leq K_{0} |x|^{\beta},
\end{equation*}
\begin{equation*}
	\abs{\tilde{f}(x)}\leq K_{1} r^{2+p-\vartheta(1+p)+\gamma}|x|^{\gamma}\leq K_{1} |x|^{\gamma},
\end{equation*}
that is, $\tilde{f}$ and $\tilde{h}$ satisfy the same structural assumptions as $f$ and $h$.
Therefore, $v$ satisfies an equation with the same structure as $u$. Given $0<m\leq 1+p$, we find
\begin{equation*}
	 \min\left\{\frac{2+p+\gamma}{1+p},\frac{2+p-m+\beta}{1+p-m}\right\}=1+\min\left\{\frac{1+\gamma}{1+p},\frac{1+\beta}{1+p-m}\right\}=:1+\varsigma
\end{equation*}
Hence, the optimal regularity can be expected to be $C^{1,\varsigma}$. In particular, if $p<\min\left\{\gamma,\beta+m\right\}$, then it follows $\varsigma>1$ and so the optimal regularity can be expected to be $C^{2,\varsigma-1}$.

Motivated by the analysis and work mentioned above, the primary objective of this paper is to establish higher regularity estimates for viscosity solutions to \eqref{3aazhumodel} in a unified way.  It should be emphasized that the key challenge stems from the simultaneous presence of the nonhomogeneous degeneracy law and the Hamiltonian term. To address this, we provide a refined analysis to capture the competition effect between the degeneracy rate and the growth of the Hamiltonian term.

In what follows, a constant is said to be universal if it depends only on the structure constants appearing in assumptions \eqref{A0}-\eqref{A5}. Our first result is an improved gradient H\"{o}lder regularity at the origin point, at which the source term $f$ and function $h$ vanish at a prescribed rate.
\begin{theorem}\label{main}
	 Assume that  hypotheses \eqref{A0}-\eqref{A5} hold.
	Let $u\in C(B_{1})$ be a bounded viscosity solution of \eqref{3aazhumodel}. Then $u$ is of class $u\in C^{1,\min\left\{\alpha_{0}^{-},\alpha_{1}\right\}}$ at the origin with
	\begin{equation*}
		\alpha_{1}:=\min\left\{\frac{1+\gamma}{1+q_{max}},\frac{1+\beta}{1+q_{max}-m}\right\}
		.
	\end{equation*}
That is, given any
\begin{equation}\label{3aerfadefanwei}
	\alpha\in
	(0,\alpha_{0})\cap \left(0,\alpha_{1}\right],
\end{equation}
there exists universal constants
$0<r<\frac{1}{4}$ and $C>0$ such that
\begin{equation*}
	\Abs{u(x)-u(0)-Du(0)\cdot x}\leq C|x|^{1+\alpha}\quad {\rm for\; all \;} x\in B_{r}.
\end{equation*}
\end{theorem}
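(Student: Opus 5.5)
We follow the geometric tangential route: compactness and approximation, then a discrete iteration with a dichotomy on the size of the affine part. First, normalize. Replace $u$ by $v(x)=u(\rho x)/\kappa$ with $\kappa$ and $\rho$ small and universal. Then $\|v\|_{L^\infty(B_1)}\le1$, and the rescaled $h$, $f$ and the oscillation of $F$ are bounded by a prescribed $\varepsilon$ while keeping the vanishing rates $|x|^\beta$, $|x|^\gamma$ at $0$. Since $\mathcal K$ is not homogeneous, rescaling changes $\Phi$ into $\Phi_{\kappa}(\xi,x)=\kappa^{-p(\rho x)}\Phi(\kappa\xi/\rho\cdot\rho,\rho x)$-type expressions. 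We therefore work with a class of equations
\[
\Phi_k(Dv+\vec b,x)F_k(D^2v,x)+h_k(x)|Dv+\vec b|^m=f_k(x),
\]
where $\Phi_k$ satisfies a degeneracy law of type (A2) with the same $L_1,L_2$ and exponents bounded by $q_{max}$. The vector $\vec b$ enters because at each iteration we subtract an affine function.

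\textbf{Approximation lemma.} For every $\delta>0$ there is $\varepsilon>0$ such that if $\|v\|_\infty\le1$, $\|f_k\|_\infty,\|h_k\|_\infty,\mathrm{osc}_{F_k}\le\varepsilon$, then there is $\phi$ with $F_\infty(D^2\phi)=0$ in $B_{1/2}$ and $\|v-\phi\|_{L^\infty(B_{1/2})}\le\delta$.

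We argue by contradiction. Uniform $C^{1,\alpha'}$ (or at least $C^{0,\alpha'}$) estimates, uniform in $\vec b$, come from the known interior regularity results \cite{Silva2023,Huo2026}, using $m\le 1+p_{min}$. Extracting a limit, we show it is a viscosity solution of $F_\infty(D^2\phi)=0$ via the standard ``cutting lemma'' argument of \cite{Imbert1}. When $|\vec b|$ is large the degenerate factor is bounded below, and the Hamiltonian term $h|\xi+\vec b|^m$ is dominated by $\Phi\gtrsim|\xi+\vec b|^{p_{min}}$ because $m\le1+p_{min}$ and $h\to0$. When $|\vec b|$ is bounded, we test with $\varphi+\ell$ to avoid the degenerate set. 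By Krylov–Safonov/$C^{1,\alpha_0}$ regularity of $\phi$, choosing $\delta$ and then $r$ small gives an affine $\ell$ with $\sup_{B_r}|v-\ell|\le r^{1+\alpha}$ for $\alpha<\alpha_0$.

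\textbf{Iteration.} We build affine $\ell_k=a_k+b_k\cdot x$ with $\sup_{B_{r^k}}|v-\ell_k|\le r^{k(1+\alpha)}$ and $|a_{k+1}-a_k|+r^k|b_{k+1}-b_k|\le Cr^{k(1+\alpha)}$. The inductive step sets
\[
w_k(x)=\frac{(v-\ell_k)(r^kx)}{r^{k(1+\alpha)}}.
\]
This function solves an equation of the above class with $\vec b=b_k/r^{k\alpha}$. Its source and Hamiltonian coefficients are
\[
\tilde f=r^{k(1-\alpha)}\,r^{k\alpha}\cdots f(r^kx)\cdot(\text{degeneracy scaling}).
\]
The key computation is that after dividing by the degeneracy factor $r^{-k\alpha p(\cdot)}$-type weights (worst case $q_{max}$), $\tilde f$ is controlled by $r^{k[(1+\gamma)-\alpha(1+q_{max})]}\varepsilon$. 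Similarly, $\tilde h$ carries $r^{k[(1+\beta)-\alpha(1+q_{max}-m)]}\varepsilon$. Both exponents are $\ge0$ exactly when $\alpha\le\alpha_1$, so $w_k$ stays in the smallness regime and the approximation lemma applies.

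\textbf{Main obstacle.} The main difficulty is making this scaling rigorous for the nonhomogeneous $\mathcal K$ with variable exponents and the $\vec b$ shift. We handle it with a dichotomy. If $|b_k|\le r^{k\alpha}$ (small slope), we proceed as above, and the normalized $\Phi$ remains in an (A2)-type class with $a$ replaced by $a(r^kx)r^{k\alpha(q-p)}\le\|a\|_\infty$ after factoring out $r^{k\alpha p(x)}$. The variable exponent costs only a bounded factor, since $r^{k\alpha(p(r^kx)-p(0))}$ is bounded for continuous $p$ on shrinking balls. If instead $|b_k|>r^{k\alpha}$ first happens at step $k_0$, then at scale $r^{k_0}$ the gradient stays bounded away from zero in a smaller ball. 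The equation then becomes uniformly elliptic, $F(D^2u)=(f-h|Du|^m)/\Phi$ with bounded right-hand side vanishing at rate $|x|^{\min(\gamma,\beta)}$. The $C^{1,\alpha}$ estimate at $0$ for $\alpha<\alpha_0$ then follows from Caffarelli's pointwise theory, under (A1), for uniformly elliptic equations, and we rescale back.

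Finally, the $b_k$ converge to $Du(0)$, and the standard interpolation between dyadic radii yields $|u(x)-u(0)-Du(0)\cdot x|\le C|x|^{1+\alpha}$ on $B_r$.
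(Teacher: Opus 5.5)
Your outline is a workable alternative, but it handles the affine part differently from the paper.

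\textbf{Your route.} You run the classical tangent-plane iteration. Subtracting $\ell_k$ at each step makes the blow-ups $w_k$ solve the shifted equation $\Phi_k(Dw_k+\vec b,x)F_k(D^2w_k,x)+h_k\abs{Dw_k+\vec b}^m=f_k$. So you need an approximation lemma for shifted operators, and you stop according to the size of the approximate slopes $b_k$.

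\textbf{The paper's route.} It never subtracts an affine function. Assuming $\abs{Du(0)}\le\delta$, its compactness lemma produces an $F$-harmonic $v$ with $v(0)=\abs{Dv(0)}=0$; the vanishing gradient is inherited through $C^1$ convergence. The one-step improvement is therefore simply $\sup_{B_\rho}\abs{u}\le\rho^{1+\alpha}$, and the paper iterates on $u(\rho^k x)/\rho^{k(1+\alpha)}$ itself for as long as $\abs{Du(0)}\le\delta\rho^{k\alpha}$. Its dichotomy is on the true gradient, through the scale $\kappa=(\abs{Du(0)}/\delta)^{1/\alpha}$. The iteration bounds $u_\kappa=u(\kappa x)/\kappa^{1+\alpha}$ universally and $\abs{Du_\kappa(0)}=\delta$. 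The universal $C^{1,\alpha'}$ modulus then gives $\abs{Du_\kappa}\ge\delta/2$ on a fixed ball, where the equation is uniformly elliptic. This spares the paper shifted equations, estimates uniform in $\vec b$, and any $C^1$-closeness to the approximant. Your scheme yields the tangent planes explicitly, but it has to pay for exactly these points.

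Three spots in your sketch need repair.

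\textbf{First:} state the approximation lemma only for $\abs{\vec b}\le 1$, which is all your dichotomy uses. Your large-$\abs{\vec b}$ justification fails when $p(x)<m\le 1+p_{min}$. If only $\abs{h}\le\varepsilon$ is known, $\abs{h}\abs{\xi+\vec b}^m/\Phi$ can be of size $\varepsilon\abs{\vec b}^{m-p(x)}$, which is not small uniformly in $\vec b$.

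\textbf{Second:} at the stopping step, $\abs{b_{k_0}}>r^{k_0\alpha}$ alone does not give $\abs{Dv}\gtrsim r^{k_0\alpha}$ near $0$. A priori $\abs{Dw_{k_0}}$ is only bounded by a universal constant, possibly larger than $1$. You need $C^1$-closeness at the previous step, which your $C^{1,\alpha'}$ compactness can supply. Set $\eta:=\|w_{k_0-1}-\phi\|_{C^1(B_{1/2})}$, with $\phi$ the approximant of $w_{k_0-1}$. Then $\abs{Dw_{k_0}(x)}\le r^{-\alpha}\eta+Cr^{\alpha_0-\alpha}\le\frac12$ on $B_1$ once $\eta\le r^{\alpha}/4$ and $Cr^{\alpha_0-\alpha}\le\frac14$. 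Hence $\abs{Dv(r^{k_0}x)}=r^{k_0\alpha}\abs{b_{k_0}r^{-k_0\alpha}+Dw_{k_0}(x)}\ge\frac12 r^{k_0\alpha}$.

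\textbf{Third:} $r^{k\alpha(p(r^kx)-p(0))}$ need not stay bounded for merely continuous $p$; that requires a log-H\"older modulus. The claim is also unnecessary. Normalizing pointwise by $r^{-k\alpha p(r^kx)}$, as the paper does, keeps $\Phi_k$ in the class (A2) with $a_k=r^{k\alpha(q_k-p_k)}a(r^kx)$ for any continuous $p$.
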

\begin{remark}\label{coro1}
	 It should be noted that, in contrast to the sharp  $C^{1,\min\left\{\alpha_{0}^{-},\frac{1}{1+q_{max}}\right\}}$ regularity for solutions with bounded source term and bounded function $h$ (see \cite{Bezerra2023,Huo2026}),
	Theorem \ref{main} exhibits a significant improvement in smoothness. In fact, when $\gamma,\beta>0$, it holds
	$$\min\left\{\frac{1+\gamma}{1+q_{max}},\frac{1+\beta}{1+q_{max}-m}\right\}>\frac{1}{1+q_{max}}.$$
	Moreover, it is noteworthy that
	$$\min\left\{\frac{1+\gamma}{1+q_{max}},\frac{1+\beta}{1+q_{max}-m}\right\}\geq 1,$$
	provided that $q_{max}\leq \min\left\{\gamma,m+\beta\right\}$. Therefore, the solution $u$ to \eqref{3aazhumodel} is of class $ C^{1,\alpha_{0}^{-}}$ at the origin, in other words, the solution of \eqref{3aazhumodel} is asymptotically as regular as $F$-harmonic function.
\end{remark}

Since viscosity solutions to convex/concave equations $F(D^{2}u)=0$ are locally of class $C^{1,1}$ by classical  Evans–Krylov theory \cite{Evans, Krylov1}. This result together with Theorem \ref{main} yields the following optimal regularity result.
\begin{corollary}
Suppose that the assumptions of Theorem \ref{main}  hold. Suppose further that operator $F$ is convex (or concave) and $q_{max}\leq \min\left\{\gamma,m+\beta\right\}$. Then $u$ is of class $ C^{1,1^{-}}$ at the origin. That is, given any
	$\alpha\in (0,1)$, there exists universal constants $0<r<\frac{1}{4}$ and $C>0$ such that
	\begin{equation*}
		\Abs{u(x)-u(0)-Du(0)\cdot x}\leq C|x|^{1+\alpha}\quad {\rm for\; all \;} x\in B_{r}.
	\end{equation*}
\end{corollary}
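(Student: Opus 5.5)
The corollary follows by running the proof of Theorem \ref{main} with $\alpha_{0}$ replaced by $1$. The only place $\alpha_{0}$ enters that argument is through the regularity of the limiting profile in the compactness step. There one shows that, as the data become small, normalized solutions of \eqref{3aazhumodel} are close to solutions of a homogeneous equation $F(D^{2}w,0)=0$, or $\mathfrak{F}(D^{2}w)=0$ for a translate/rescaling of $F$ that keeps the same ellipticity. For general $F$ these $w$ are only $C^{1,\alpha_{0}}$ by Krylov–Safonov. When $F$ is convex (or concave), Evans–Krylov gives $w\in C^{2,\bar\alpha}$ locally, hence in particular $C^{1,1}$, with universal bounds. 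Therefore, for every $\alpha\in(0,1)$, the approximation lemma delivers an affine function $\ell$ with $\sup_{B_{\rho}}|w-\ell|\le C\rho^{2}\le \tfrac12\rho^{1+\alpha}$ once $\rho$ is chosen small depending on $\alpha$.

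\textbf{Step 1: the admissible range of $\alpha$.} Under $q_{max}\le\min\{\gamma,m+\beta\}$ we have $\frac{1+\gamma}{1+q_{max}}\ge 1$ and $\frac{1+\beta}{1+q_{max}-m}\ge 1$ (as noted in Remark \ref{coro1}), so $\alpha_{1}\ge 1$. The condition \eqref{3aerfadefanwei}, with $\alpha_{0}$ now replaced by $1$, becomes simply $\alpha\in(0,1)$.

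\textbf{Step 2: the iteration.} Repeat the iterative scheme of Theorem \ref{main}. At scale $\rho^{k}$ set
\[
v_{k}(x)=\frac{u(\rho^{k}x)-\ell_{k}(\rho^{k}x)}{\rho^{k(1+\alpha)}},
\]
and use the dichotomy on the size of $|D\ell_{k}|$ relative to $\rho^{k\alpha}$. This handles both the degeneracy $\Phi$ and the Hamiltonian term. The rescaled $h$ and $f$ stay small because $\alpha\le\alpha_{1}$; this is exactly the scaling computation in the introduction. Since $F$ is only Hölder in $x$ by \eqref{A1}, each rescaled operator has the form $F(M,\rho^{k}x)$. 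Its convexity is preserved, and it converges to $F(\cdot,0)$, which is convex. So the limit problems in the compactness lemma are convex equations, and Evans–Krylov applies uniformly.

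\textbf{Step 3: conclusion.} Summing the geometric estimates gives $|u(x)-u(0)-Du(0)\cdot x|\le C|x|^{1+\alpha}$ in a small ball $B_{r}$.

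\textbf{Main obstacle.} The delicate point is making sure the compactness argument in Theorem \ref{main} uses only the universal $C^{1,\alpha_{0}}$ estimate of the limit profile, so that it can be swapped for the $C^{1,1}$ estimate. In particular, one has to check that the limiting equations arising in the "large slope" branch of the dichotomy are still of the form $\mathfrak{F}(D^{2}w)=0$ with $\mathfrak{F}$ convex, and not perturbed by a shifted-gradient structure. Assuming the proof of Theorem \ref{main} is organized this way, the corollary follows verbatim.
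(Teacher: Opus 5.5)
Your core idea matches the paper's one-line justification: Evans--Krylov makes the limiting profiles $C^{1,1}$, so $\alpha_0$ can be replaced by $1$ in Theorem \ref{main}, and $\alpha_1\ge 1$ when $q_{max}\le\min\{\gamma,m+\beta\}$. But your premise that the limiting profile is the \emph{only} place where $\alpha_0$ enters is false for the paper's proof, and upgrading only that step does not close the argument.

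The paper never subtracts affine functions. Lemma \ref{lem3.3} iterates $u(\rho^k x)/\rho^{k(1+\alpha)}$ only while $|Du(0)|\le\delta\rho^{k\alpha}$. At the stopping scale $\kappa=(|Du(0)|/\delta)^{1/\alpha}$ (Case 1.2), the rescaled $u_\kappa$ has $|Du_\kappa(0)|=\delta$, hence $\delta/2\le|Du_\kappa|\le 2\delta$ on a universal ball $B_r$. There it solves the uniformly elliptic equation $F_\kappa(D^2u_\kappa,x)=\Phi_\kappa(Du_\kappa,x)^{-1}[f_\kappa-h_\kappa|Du_\kappa|^m]$ with bounded right-hand side. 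This is not a compactness step. The paper applies the $C^{1,\alpha_0^-}$ estimate for such equations, which only gives $\sup_{B_\iota}|u_\kappa-Du_\kappa(0)\cdot x|\le C\iota^{1+\alpha}$ for $\alpha<\alpha_0$. For $\alpha\in[\alpha_0,1)$ your argument says nothing about this regime.

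The missing ingredient is the convex counterpart: solutions of a convex (or concave) uniformly elliptic equation with H\"older $x$-dependence and bounded right-hand side are $C^{1,\alpha}$ for every $\alpha<1$, with universal bounds. This follows from Caffarelli's $W^{2,p}$ estimates, or from Teixeira's log-Lipschitz gradient estimate. It is Evans--Krylov plus a perturbation argument, not Evans--Krylov for $F(D^2w)=0$ itself. It is also why the conclusion is $C^{1,1^-}$ rather than $C^{1,1}$. The result is standard, but it must be invoked.

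Conversely, the obstacle you single out does not occur in the paper. The large-slope regime is handled by the non-degeneracy of $Du_\kappa$, not by a limiting equation, so no shifted-gradient structure appears. For the affine-subtraction scheme you sketch, however, it would be a real issue. You would need an approximation lemma for $\Phi_k(Dv+q,x)F_k(D^2v,x)+h_k|Dv+q|^m=f_k$ that is uniform in $q$. After dividing by $\Phi_k$, which can be as small as $L_1|Dv+q|^{p_k}$, the Hamiltonian contributes a term of size up to roughly $\|h_k\|_{L^\infty}|q|^{m-p_k}$. This is not small for large $|q|$ when $m>p_k$, a case allowed by $m\le 1+p_{min}$. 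So your proposal needs either the convex estimate above (in the paper's structure) or this uniform lemma (in yours), and as written it supplies neither.
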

The final result provided in this work is a Hessian continuity at a local extrema.
\begin{theorem}\label{3aamain2}
	Suppose  that hypotheses \eqref{A0}-\eqref{A5} hold.
	Let $u\in C(B_{1})$ be a bounded viscosity solution of \eqref{3aazhumodel}. Assume further $q_{max}<\min\left\{\gamma,m+\beta\right\}$, and that origin is a local extrema of $u$, i.e., $u(0)\leq u(x)$ or $u(0)\geq u(x)$ in $B_{\nu}(0)$ for some $\nu\in (0,1/4)$. 	Then $u$ is twice differentiable at the origin and
	\begin{equation}\label{3ajielun2}
		|u(x)-u(0)|\leq C|x|^{2+\mu} \quad {\rm for\; all \;} x\in B_{r},
	\end{equation}
	where
	\begin{equation*}
		\mu:=\min\left\{\frac{\gamma-q_{max}}{1+q_{max}},\frac{m+\beta-q_{max}}{1+q_{max}-m}\right\}
	\end{equation*}
	and universal constants $0<r<\nu$ and $C>0$. That is, $u$ is
	of class $u\in C^{2,\mu}$ at the origin with $|Du(0)|=|D^{2}u(0)|=0$.
\end{theorem}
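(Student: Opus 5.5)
We prove Theorem \ref{3aamain2} by a dyadic geometric iteration of the oscillation $\sup_{B_{\rho}}|u-u(0)|$, with the scaling exponent $\vartheta=2+\mu$. The natural choice comes from the heuristic scaling in the introduction: with $q_{max}<\min\{\gamma,m+\beta\}$,
\begin{equation*}
\min\left\{\frac{2+q_{max}+\gamma}{1+q_{max}},\frac{2+q_{max}-m+\beta}{1+q_{max}-m}\right\}=2+\mu .
\end{equation*}
Assume the origin is a local minimum; the maximum case follows by considering $-u$. After subtracting $u(0)$ and normalizing, we may take $u(0)=0$, $u\ge 0$ in $B_\nu$, and $\|u\|_{L^\infty(B_1)}\le 1$. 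A smallness regime $\|f\|_\infty+\|h\|_\infty\le\varepsilon$ is reached by the rescaling $v(x)=u(\kappa x)/\kappa^{2+\mu}$ with $\kappa$ small. The point is that the $|x|^\gamma,|x|^\beta$ vanishing absorbs the factors $\kappa^{2+q-\vartheta(1+q)+\gamma}$ and $\kappa^{2+q-m-\vartheta(1+q-m)+\beta}$. Since the exponents $p(x),q(x)$ and $a(x)$ vary, the degeneracy $\mathcal K(|\xi|,x)$ is handled as in the proof of Theorem \ref{main}: the rescaled operator still satisfies \eqref{A2}, with $a$ replaced by a harmless multiple and exponents frozen at $q_{max}$ in the worst case.

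The key step is a flatness lemma. There exists $\rho\in(0,1/2)$ such that for every normalized solution with $\varepsilon$ small, $u\ge0$ and $u(0)=0$, we have
\begin{equation*}
\sup_{B_\rho}u\le \rho^{2+\mu}.
\end{equation*}
We prove it by compactness. Suppose a sequence $u_k$ with $\varepsilon_k\to0$ violates the bound. The uniform $C^{1,\alpha}$ estimates of Theorem \ref{main}, or the known local $C^{1,\alpha}$ theory for \eqref{3aazhumodel}, give convergence to some $u_\infty$ with $u_\infty\ge0$ and $u_\infty(0)=0$. By the usual cutting lemma for degenerate operators (the dichotomy: either $|Du|$ is large and the equation is uniformly elliptic, or one argues with test functions), $u_\infty$ solves $F_\infty(D^2u_\infty)=0$, where $F_\infty$ is a frozen operator given by \eqref{A1}.

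This limiting step is where we expect the main obstacle. $F_\infty$-harmonic functions are only $C^{1,\alpha_0}$, not $C^{2,\mu}$, so we cannot simply invoke regularity of the limit. Instead, the extremum is what rescues us. Since $u_\infty\ge0$ and $u_\infty(0)=0$, the strong minimum principle (Harnack inequality) for $F_\infty(D^2 w)=0$ forces $u_\infty\equiv0$ in a neighborhood. This requires checking that constants solve the limit equation, which holds because $F(0,x)=0$. We then get $\sup_{B_\rho}u_k\to0$, contradicting $\sup_{B_\rho}u_k>\rho^{2+\mu}$.

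To justify that the limit equation is $F_\infty=0$ despite the Hamiltonian term $h|Du|^m$ when $m$ may be $0$, we use the smallness of $h$ together with the dichotomy from the proof of Theorem \ref{main}: on $\{|Du|\gtrsim\delta\}$ the term $h|Du|^m/\Phi$ is small, while near critical points we test with perturbed paraboloids.

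We then iterate. Set $v_k(x)=u(\rho^kx)/\rho^{k(2+\mu)}$. The choice of $\vartheta=2+\mu$ guarantees that $v_k$ again satisfies all hypotheses, with the same smallness $\varepsilon$, remains nonnegative with $v_k(0)=0$, and is bounded by $1$ by induction. Applying the flatness lemma to $v_k$ gives $\sup_{B_{\rho^{k}}}u\le\rho^{k(2+\mu)}$ for every $k$. For $|x|<r$, choose $k$ with $\rho^{k+1}\le|x|<\rho^k$; this yields $|u(x)-u(0)|\le \rho^{-(2+\mu)}|x|^{2+\mu}$, which is \eqref{3ajielun2}. Since $2+\mu>2$, this bound immediately gives twice differentiability at $0$ with $Du(0)=0$ and $D^2u(0)=0$.
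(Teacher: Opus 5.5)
Your overall architecture is the paper's: a compactness flatness lemma at an extremum, then iteration at the critical exponent $2+\mu$. In the flatness lemma the limit is $F_\infty$-harmonic, nonnegative and vanishes at $0$, so it is $\equiv 0$ by the strong maximum principle; this is Lemma \ref{3aalem4.1}.

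\textbf{The gap.} The step that fails is how you enter the small regime. Your rescaling $v(x)=u(\kappa x)/\kappa^{2+\mu}$ gives
\[
|\tilde f(x)|\le K_1\,\kappa^{\gamma-p(\kappa x)-\mu(1+p(\kappa x))}|x|^{\gamma},\qquad
|\tilde h(x)|\le K_0\,\kappa^{m+\beta-p(\kappa x)-\mu(1+p(\kappa x)-m)}|x|^{\beta}.
\]
These exponents are only nonnegative. For instance, if $p\equiv q\equiv q_{max}$ and $\mu=\frac{\gamma-q_{max}}{1+q_{max}}$, the $f$-exponent is exactly $0$, so nothing becomes small. The critical exponent $2+\mu$ is precisely the one that leaves the class invariant: it can preserve smallness but never create it. In addition, $\|v\|_{L^\infty(B_1)}$ is only bounded by $\kappa^{-(2+\mu)}$. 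So $v$ is not normalized, and your flatness lemma, proved for solutions with $\|u\|_{L^\infty(B_1)}\le 1$, cannot be applied to it.

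\textbf{The fix.} Use the paper's two-stage scaling.
\begin{enumerate}
\item First set $w(x)=u(\kappa x)$, with no division and with $\kappa\le\nu$ small. Then $\|w\|_{L^\infty(B_1)}\le 1$ and $w\ge 0$ on all of $B_1$. Also ${\rm osc}\le C_F\kappa^{\theta}$, $|f_\kappa(x)|\le K_1\kappa^{2+p_{min}+\gamma}|x|^\gamma$ and $|h_\kappa(x)|\le K_0\kappa^{2+p_{min}-m+\beta}|x|^\beta$. This is smallness in the \emph{weighted} form $|f|\le\varepsilon|x|^\gamma$, $|h|\le\varepsilon|x|^\beta$.
\item Only then iterate $w(\rho^k x)/\rho^{k(2+\mu)}$. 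The two exponents above are $\ge 0$ whenever $p\le q_{max}$, so the weighted bounds pass to every iterate with the same $\varepsilon$.
\end{enumerate}
It is this weighted form that is invariant, not the sup-norm smallness $\|f\|_\infty+\|h\|_\infty\le\varepsilon$ you state. Sup-norm smallness alone is multiplied at step $k$ by $\rho^{-k(p+\mu(1+p))}$, which tends to $\infty$ since $\mu>0$. With this correction your argument coincides with the paper's proof, which uses $w_1=u(\varrho x)$ and then $w_k=\nu^{-k(2+\mu)}w_1(\nu^k x)$. Iterating with ratio $\nu$ rather than your $\rho$ is an inessential difference.
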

\begin{remark}
	Owing to the generality of the degeneracy term and the presence of the Hamiltonian term, our findings above encompass and extend the regularity results previously obtained in \cite{Nascimento2025}. 
However, due to the abstract form of $p(x)$ and $q(x)$ and the simultaneous presence of $h(x)|Du|^{m}$, implementing the strategy from \cite{Nascimento2025} becomes a delicate task. In addition, \eqref{3aazhumodel} is no longer homogeneous caused by the presence of different and variable gradient power, which makes the scaling process more tricky. In contrast with the single power-type degeneracy law, the quantities involving the gradient variable on the left hand side of \eqref{3aazhumodel} are not identically preserved after scaling.
\end{remark}
\begin{remark}
An extension of our results also hold to multi-phase  degenerate fully nonlinear equation with Hamiltonian terms modelled by
\begin{equation*}
	\bigg(|Du|^{p(x)}+\sum_{i=1}^{N}a_{i}(x)|Du|^{q_{i}(x)}\bigg)F(D^2 u, x)+h(x)|Du|^{m}
	=f(x) \quad  \text{in} \quad B_{1},
\end{equation*}
where $0\leq a_{i}(\cdot)\in C(B_{1})$, $p(\cdot),q_{i}(\cdot)\in C(B_{1})$, $i\in\left\{1,2,\cdots,N\right\}$, and $0\leq p_{min}\leq p(x)\leq q_{1}(x)\leq q_{2}(x)\leq\cdots\leq q_{N}(x)\leq q_{max}<\infty$. 
\end{remark}

The remainder of this paper is organized as follows. In Section \ref{section2}, we present some basic notions and explain the scaling property. 
In Section \ref{section3}, we establish the improved gradient regularity result. Finally, in last section, we complete the proof of Theorem \ref{3aamain2} regarding Schauder-type regularity estimate.
\section{Preliminaries}\label{section2}
\subsection{Notations and basic concepts}
Throughout this paper, let $S^{d}$ be the set of all real symmetric $d\times d$ matrices and $B_{r}(x_{0})$ be the open ball with radius $r$ and centred at $x_{0}\in\rn$. In particular, we shall simply denote $B_{r}:=B_{r}(0)$. In addition, $C$ denotes a constant whose value may vary from line to line, and only the relevant dependencies are specified in parentheses.

Let us first introduce the notion of viscosity
solution for the following equations
\begin{equation}\label{model22}
	G(D^{2}u,Du,x):=f(x)-\Phi(Du, x)F(D^2 u, x)-h(x)\abs{Du}^{m}=0 \quad  \text{in} \quad B_{1}.
\end{equation}
\begin{definition}
	A function $u\in C(B_{1})$ is a viscosity supersolution (resp. subsolution) to \eqref{model22}, if whenever $\varphi \in C^2\left(B_{1}\right)$ and $x_0 \in B_{1}$ such that $u -\varphi$ has a local minimum (resp. local maximum) at $x_0$, then
	$$
	G(D^2\varphi(x_0),D\varphi(x_0),x_0) \geq  0 \quad({\rm resp.}\; G(D^2\varphi(x_0),D\varphi(x_0),x_0) \leq  0).
	$$
	Finally, a function $u$ is said to be a viscosity solution of \eqref{model22} if it is simultaneously a viscosity supersolution and a viscosity subsolution.
\end{definition}

To simplify our presentation let us introduce the following definitions.
\begin{definition}
	Given a function $v\in C^{1}(B_{1})$, we denote the set of zero critical points as
	$$\mathscr{C}(v):=\{x\in B_{1}:v(x)=|Dv(x)|=0\}.$$
\end{definition}
\begin{definition}
	Let $F$ be as in \eqref{A0}.
	A function $v\in C(B_{1})$ is said to be
	$F$-harmonic in $B_{1}$ if it is a viscosity solution of $F(D^{2}v)=0$ in $B_{1}$.
\end{definition}

\subsection{Scaling feature}\label{scaling}
In this subsection, we start off by making several comments on the scaling features of the model \eqref{3aazhumodel}, which will be used frequently throughout the paper. Let $u$ be a viscosity solution to \eqref{3aazhumodel}. For constants $K\geq 1\geq \tau>0$ arbitrary, define $\tilde{u}:B_{1}\rightarrow \mathbb{R}$ as
$$\tilde{u}(x):=\frac{u(\tau x)}{K}.$$
We can readily check that
$\tilde{u}$ is a viscosity solution of
\begin{equation} \label{3shensuofangcheng}
	\tilde{\Phi}(D\tilde{u}, x)	\tilde{F}(D^2 \tilde{u}, x)+	\tilde{h}(x)\Abs{D\tilde{u}}^{m}= 	\tilde{f}(x) \quad \text{in} \quad  B_{1},
\end{equation}
where
\begin{align*}
	\tilde{F}(X,x):=&\frac{\tau^{2}}{K}F\left(\frac{K}{\tau^{2}}X,\tau x\right),\\
	\tilde{\Phi}(\xi,x):=&\left(\frac{\tau}{K}\right)^{p(\tau x)}\Phi\left(\frac{K}{\tau}\xi,\tau x\right),\\
	\tilde{\mathcal{K}}(|\xi|,x):=&|\xi|^{p(\tau x)}+\tilde{a}(x)|\xi|^{q(\tau x)},\\
	\tilde{a}(x):=&\left(\frac{\tau}{K}\right)^{p(\tau x)-q(\tau x)}a(\tau x),\\
	\tilde{h}(x):=&\frac{\tau^{2+p(\tau x)-m}}{K^{1+p(\tau x)-m}}h(\tau x),\\
	\tilde{f}(x):=&\frac{\tau^{2+p(\tau x)}}{K^{1+p(\tau x)}}f(\tau x).
\end{align*}
Note that $\tilde{F}$ is still a uniformly $(\lambda,\Lambda)$-elliptic operator. A direct calculation yields
\begin{equation*}
	\begin{split}
		{\rm osc}_{\tilde{F}}(x,0)
		&=\sup\limits_{M\in S^{d}\setminus \{0\}}\frac{\Abs{F\left(\frac{K}{\tau^{2}}M,\tau x\right)-F\left(\frac{K}{\tau^{2}}M,0\right)}}{\frac{K}{\tau^{2}}\|M\|}={\rm osc}_{{F}}(\tau x,0).
	\end{split}
\end{equation*}
Moreover,
\begin{equation*}
	L_{1}	\tilde{\mathcal{K}}(|\xi|,x)\leq	\tilde{\Phi}(\xi,x) \leq L_{2}	\tilde{\mathcal{K}}(|\xi|,x)\quad {\rm for}\;\,(\xi,x)\in\mathbb{R}^{d}\times B_{1},
\end{equation*}
that is, $\tilde{\Phi}$ satisfies the same structural assumption as $\Phi$. It follows from \eqref{A4}, \eqref{A5}, $K\geq 1\geq \tau>0$, and $m\leq 1+p_{min}$ that
\begin{equation*}
	\abs{\tilde{f}(x)}\leq \frac{\tau^{2+p_{\rm min}}}{K^{1+p_{\rm min}}}\abs{f(\tau x)}\leq K_{1}\tau^{2+p_{\rm min}+\gamma}|x|^{\gamma}\leq K_{1}|x|^{\gamma},
\end{equation*}
\begin{equation*}
	\abs{\tilde{h}(x)}\leq \frac{\tau^{2+p_{\rm min}-m}}{K^{1+p_{\rm min}-m}}\abs{h(\tau x)}\leq K_{0}\tau^{2+p_{\rm min}-m+\beta}|x|^{\beta}\leq K_{0}|x|^{\beta},
\end{equation*}
which means that $\tilde{f}$ and $\tilde{h}$ satisfy the same structural assumptions as $f$ and $h$. Hence,
$\tilde{u}$ satisfies equation \eqref{3shensuofangcheng}, which is subject to the same structural assumptions as the equation for $u$.

In particular, up to a normalization, i.e., by choosing $K:=1+\|u\|_{L^{\infty}(B_{1})}$, we can assume, with no loss of generality, that $\|u\|_{L^{\infty}(B_{1})}\leq 1$ is a normalized solution of \eqref{3aazhumodel}. In addition, since $u(x)-u(0)$ satisfies the same equation as $u(x)$, by translation, we can assume $u(0)=0$.
\section{Improved graident regularity}\label{section3}
This section is dedicated to the proof of Theorem \ref{main} concerning an improved H\"{o}lder gradient regularity at the origin. We begin by proving a $F$-harmonic approximation result, which essentially states that, under suitable smallness assumptions, a normalized viscosity solution $u$
of \eqref{3aazhumodel} in $B_1$ can be approximated by a $F$-harmonic function $v$ with $0\in \mathscr{C}(v)$, provided that $Du(0)$ is near zero.
\begin{lemma}
	\label{lem3.1}
	Let the conditions \eqref{A0}-\eqref{A3} be in force. Let $u\in C(B_{1})$ be a normalized viscosity solution of \eqref{3aazhumodel} with
	 $u(0)=0$. Given $\varepsilon>0$, there exists constant $\delta>0$ depending on $d,\lambda,\Lambda,\varepsilon$ such that if
	\begin{equation*}
	\max\left\{|Du(0)|,\|{\rm osc}_{{F}}\|_{L^{\infty}\left(B_{1}\right)},\|f\|_{L^{\infty}(B_{1})},\|h\|_{L^{\infty}(B_{1})}\right\}\leq \delta,
	\end{equation*}
	 then one can find a $F$-harmonic function $v$ with $v\in C_{loc}^{1,\alpha_{0}}(B_{1})$, so that $0\in \mathscr{C}(v)$ and
	 \begin{equation*}
	 	\|u-v\|_{L^{\infty}(B_{1/2})}\leq \varepsilon.
	 \end{equation*}
\end{lemma}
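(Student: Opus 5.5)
We argue by contradiction combined with compactness. Suppose the statement fails for some $\varepsilon_{0}>0$. Then there are sequences $F_{k}$, $\Phi_{k}$ (with exponents $p_{k},q_{k}$ and modulating functions $a_{k}$), $h_{k}$, $f_{k}$ satisfying \eqref{A0}--\eqref{A3} with the same constants, and normalized viscosity solutions $u_{k}$ of
\[
\Phi_{k}(Du_{k},x)F_{k}(D^{2}u_{k},x)+h_{k}(x)|Du_{k}|^{m}=f_{k}(x)
\]
with $u_{k}(0)=0$ and
\[
\max\left\{|Du_{k}(0)|,\|{\rm osc}_{F_{k}}\|_{L^{\infty}(B_{1})},\|f_{k}\|_{L^{\infty}(B_{1})},\|h_{k}\|_{L^{\infty}(B_{1})}\right\}\leq 1/k.
\]
For these, $\|u_{k}-v\|_{L^{\infty}(B_{1/2})}>\varepsilon_{0}$ for every $F$-harmonic $v$ with $0\in\mathscr{C}(v)$.

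\textbf{Compactness.} The first step is a uniform modulus of continuity for the $u_{k}$ on $B_{3/4}$. Since $\|f_{k}\|_{\infty},\|h_{k}\|_{\infty}\le 1$, the known local $C^{1,\alpha}$ (or at least $C^{0,\alpha}$) estimates for \eqref{3aazhumodel} give uniform bounds; this is the regularity theory of \cite{Silva2023,Huo2026,Bezerra2023} that the paper relies on.

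\begin{itemize}
\item The Hamiltonian term with $m\le 1+p_{min}$ and bounded coefficient is handled in those works.
\item Alternatively, one can use the standard argument that, where $|Du|$ is large, the equation is uniformly elliptic with bounded right-hand side. Indeed, dividing by $\Phi\ge L_{1}|\xi|^{p}$ controls $h|\xi|^{m-p}$ because $m\le 1+p$, which gives Hölder estimates à la Imbert--Silvestre.
\end{itemize}

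By Arzelà--Ascoli, a subsequence converges locally uniformly to some $u_{\infty}$, with $\|u_{\infty}\|_{\infty}\le1$ and $u_{\infty}(0)=0$. Uniform ellipticity with fixed $(\lambda,\Lambda)$ and $F_{k}(0,x)=0$ gives $\|F_{k}(M,x)\|\le \Lambda\|M\|$. Combined with ${\rm osc}_{F_{k}}\to0$, this lets us pass to a further subsequence with $F_{k}(\cdot,x)\to F_{\infty}(\cdot)$ locally uniformly, where $F_{\infty}$ is $(\lambda,\Lambda)$-elliptic and independent of $x$. We also extract $C^{1,\alpha}$ convergence, so $Du_{k}(0)\to Du_{\infty}(0)=0$ if we use a uniform $C^{1,\alpha}$ estimate. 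Otherwise the condition $0\in\mathscr C$ is recovered in the last step below.

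\textbf{Limit equation.} The main step, and the main obstacle, is to show that $u_{\infty}$ solves $F_{\infty}(D^{2}u_{\infty})=0$. Following Imbert--Silvestre, take a quadratic polynomial $\varphi$ touching $u_{\infty}$ from below at $x_{0}$, and suppose $F_{\infty}(D^{2}\varphi)<0$ (the subsolution case is symmetric).

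\begin{itemize}
\item \emph{Case $D\varphi(x_{0})\neq 0$.} Perturbed touching points $x_{k}\to x_{0}$ give $\Phi_{k}(D\varphi(x_{k}),x_{k})F_{k}(D^{2}\varphi,x_{k})\ge f_{k}-h_{k}|D\varphi|^{m}\to0$. Since $\Phi_{k}\ge L_{1}|D\varphi(x_{k})|^{p_{k}(x_{k})}$ is bounded below (the exponents are bounded by $q_{max}$), we can divide to get $\liminf F_{k}(D^{2}\varphi,x_{k})\ge0$, which is a contradiction.
\item \emph{Case $D\varphi(x_{0})=0$.} Here the degeneracy prevents division. Use instead the test function $\varphi(x)+e\cdot x$ with small $|e|$, chosen along a sequence so that the touching points have nonvanishing gradient $D\varphi(x_{k})+e$. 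One then argues as in Imbert--Silvestre's lemma that it suffices to test with functions whose gradient is nonzero, or with $\varphi$ a paraboloid where $D^{2}\varphi$ has a positive eigenvalue direction.
\end{itemize}

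The term $h_{k}|D\varphi+e|^{m}$ tends to zero uniformly because $\|h_{k}\|_\infty\to0$ and the gradients stay bounded. This is why only boundedness of $h$ (not $m\le 1+p$) is needed here.

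\textbf{Conclusion.} Krylov--Safonov/Caffarelli regularity gives $v:=u_{\infty}\in C^{1,\alpha_{0}}_{loc}(B_{1})$. We have $v(0)=0$, and $Dv(0)=\lim Du_{k}(0)=0$ by the uniform $C^{1,\alpha}$ convergence. If only $C^{0}$ convergence is available, we instead replace $v$ by $v-Dv(0)\cdot x$, which is still $F_{\infty}$-harmonic. Its distance to $u_{k}$ is controlled once we show $|Dv(0)|$ is small, which follows from gradient convergence at $0$ under the uniform $C^{1,\alpha}$ bounds of the cited works. Then $\|u_{k}-v\|_{L^\infty(B_{1/2})}\to0$, contradicting the choice of $\varepsilon_{0}$.
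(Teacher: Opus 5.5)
Your proposal is correct and is essentially the paper's proof: contradiction, uniform $C^{1,\alpha'}$ compactness from the cited regularity theory (so $u_j\to u_\infty$ in $C^1_{loc}$ and $u_\infty(0)=|Du_\infty(0)|=0$), a frozen-coefficient limit operator $F_\infty$, and stability of viscosity solutions, which you sketch via the Imbert--Silvestre nonzero-gradient argument whereas the paper simply invokes a stability lemma from its reference. Two small corrections: your ``$C^0$-only'' fallback is circular, since convergence of gradients at $0$ requires exactly the uniform $C^{1,\alpha'}$ bound that gives $0\in\mathscr{C}(v)$; and under the paper's convention ($F$ increasing in $M$), touching from below gives $\Phi F(D^2\varphi)+h|D\varphi|^m\le f$, so the supersolution test should suppose $F_\infty(D^2\varphi)>0$, which is also the case in which your positive-eigenvalue trick applies.
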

\begin{proof}
	Argue by contradiction. If the claim fails, then there exist $\varepsilon_{0}>0$ and sequences
	of functions $\{F_{j}\}_{j\in \mathbb{N}}$, $\{\Phi_{j}\}_{j\in \mathbb{N}}$, $\{f_{j}\}_{j\in \mathbb{N}}$, $\{u_{j}\}_{j\in \mathbb{N}}$, $\{h_{j}\}_{j\in \mathbb{N}}$ such that
	\begin{equation}\label{model333}
		\Phi_{j}(Du_{j}, x)F_{j}(D^2 u_{j}, x)+h_{j}(x)\abs{Du_{j}}^{m} =f_{j}(x) \quad  \text{in} \quad B_{1}
	\end{equation}	
with $ \|u_{j}\|_{L^{\infty}(B_{1})}\leq 1$ and $u_{j}(0)=0$, as well as
\begin{equation}\label{3aajintiaojiana}
	\max\left\{|Du_{j}(0)|,\|{\rm osc}_{{F_{j}}}\|_{L^{\infty}\left(B_{1}\right)},\|f_{j}\|_{L^{\infty}(B_{1})},\|h_{j}\|_{L^{\infty}(B_{1})}\right\}\leq \frac{1}{j},
\end{equation}
where $F_{j}:S^{d}\times B_{1}\rightarrow \mathbb{R}$ is uniformly $(\lambda,\Lambda)$-elliptic, $\Phi_{j}$ satisfies \eqref{3atuihuaxing} and \eqref{3aaatuihuatiaojian2} with
$$0\leq p_{min}\leq p_{j}(x)\leq q_{j}(x)\leq q_{max}<\infty,$$
$$p_{j}(\cdot),q_{j}(\cdot)\in C(B_{1}) \quad {\rm and}\quad 0\leq a_{j}(\cdot)\in C(B_{1}).$$
	Nonetheless, it holds
	 \begin{equation}\label{3bijinmaodun}
		\|u_{j}-v\|_{L^{\infty}(B_{1/2})}>\varepsilon_{0}\quad {\rm for\; any \;} j\in \mathbb{N},
	\end{equation}
	for all $F$-harmonic function $v$ with $0\in \mathscr{C}(v)$.
	
	By the uniform ellipticity of $F_{j}$ and \eqref{3aajintiaojiana}, we can see that $F_{j}$ converge locally uniformly to some uniformly $(\lambda,\Lambda)$-elliptic
	operator $F_{\infty}$ (with frozen coefficients). In addition,  we know from \cite[Theorem 1.1]{Huo2026} that the sequence $\{u_{j}\}_{j\in\mathbb{N}}\subset C_{loc}^{1,\alpha^{\prime}}(B_{1})$ with $\alpha^{\prime}\in(0,1)$. Therefore, by applying Arzel${\rm \grave{a}}$-Ascoli theorem, we conclude that, up to a subsequence, $u_{j}$ converges locally uniformly to some continuous function $u_{\infty}$ in $B_{1}$ in the $C^{1}$-topology.
	Now, by arguing as \cite[Lemma 4.1]{Huo2026}, we can conclude that
	 $u_{\infty}$ is a viscosity solution of
	 \begin{equation}\label{homojie}
	 	F_{\infty}(D^{2}u_{\infty})=0 \quad {\rm in}\quad  B_{3/4}
	 \end{equation}
with $u_{\infty}(0)=|D u_{\infty}(0)|=0$. Finally, taking $v=u_{\infty}$, we reach a contradiction with \eqref{3bijinmaodun} for $j$ sufficiently large. This completes the proof of the desired result.
\end{proof}

\begin{lemma}\label{lem3.2}
	 Suppose that the hypotheses of Lemma \ref{lem3.1} are in force. Given $\alpha\in(0,\alpha_{0})$, there exist $\delta>0$ and $0<\rho<\frac{1}{2}$ depending on $d,\lambda,\Lambda,\alpha$, such that if
	\begin{equation*}
	\max\left\{|Du(0)|,\|{\rm osc}_{{F}}\|_{L^{\infty}\left(B_{1}\right)},\|f\|_{L^{\infty}(B_{1})},\|h\|_{L^{\infty}(B_{1})}\right\}\leq \delta,
\end{equation*}
then there holds
	\begin{equation*}
	\sup\limits_{x\in B_{\rho}}|u(x)|\leq \rho^{1+\alpha}.
	\end{equation*}
\end{lemma}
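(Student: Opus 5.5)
We combine the $F$-harmonic approximation of Lemma \ref{lem3.1} with the regularity theory for $F$-harmonic functions. Fix $\alpha\in(0,\alpha_{0})$.

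\textbf{Step 1: a second-order bound for the approximant.} Any $F$-harmonic function $v$ in $B_{1}$ with $\|v\|_{L^{\infty}(B_{1/2})}\leq 2$ satisfies, by the Krylov--Safonov $C^{1,\alpha_{0}}$ theory \cite[Chapter 5]{Caff1} applied in $B_{1/2}$, an estimate $\|v\|_{C^{1,\alpha_{0}}(B_{1/4})}\leq C_{0}$, with $C_{0}=C_{0}(d,\lambda,\Lambda)$. The operator here is the limiting operator with frozen coefficients, and the constant is uniform in it. If in addition $0\in\mathscr{C}(v)$, i.e. $v(0)=|Dv(0)|=0$, Taylor's formula gives
\begin{equation*}
\sup_{B_{\rho}}|v|\leq C_{0}\rho^{1+\alpha_{0}}\quad\text{for all }\rho\leq 1/4 .
\end{equation*}
The normalization of $v$ is harmless: since $\|u\|_{L^{\infty}(B_1)}\le 1$, we get $\|v\|_{L^\infty(B_{1/2})}\le 1+\varepsilon\le 2$ once $\varepsilon\le 1$.

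\textbf{Step 2: choice of constants.} Because $\alpha<\alpha_{0}$, we can pick $\rho\in(0,1/4)$ so small that
\begin{equation*}
C_{0}\rho^{1+\alpha_{0}}\leq \tfrac12\rho^{1+\alpha},\quad\text{i.e. }\rho^{\alpha_{0}-\alpha}\leq \frac{1}{2C_{0}} .
\end{equation*}
This $\rho$ depends only on $d,\lambda,\Lambda,\alpha$. Next set $\varepsilon:=\tfrac12\rho^{1+\alpha}$ (note $\varepsilon\le1$), and let $\delta=\delta(\varepsilon)$ be the constant given by Lemma \ref{lem3.1}.

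\textbf{Step 3: conclusion.} Under the smallness hypothesis, Lemma \ref{lem3.1} provides an $F$-harmonic $v$ with $0\in\mathscr{C}(v)$ and $\|u-v\|_{L^{\infty}(B_{1/2})}\leq\varepsilon$. Then
\begin{equation*}
\sup_{B_{\rho}}|u|\leq \sup_{B_{\rho}}|u-v|+\sup_{B_{\rho}}|v|\leq \tfrac12\rho^{1+\alpha}+\tfrac12\rho^{1+\alpha}=\rho^{1+\alpha}.
\end{equation*}

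\textbf{Main obstacle.} The delicate point is that $C_{0}$ must be universal. It must not depend on the particular $v$ produced by the compactness argument, nor on the particular limiting operator $F_{\infty}$. This holds because the interior $C^{1,\alpha_{0}}$ estimates depend only on $d,\lambda,\Lambda$ and on the $L^{\infty}$ bound of $v$, and that bound is controlled through $\varepsilon\le 1$ and $\|u\|_{L^{\infty}}\le 1$. Everything else is bookkeeping of the order in which the constants are chosen: first $\rho$, then $\varepsilon$, then $\delta$.
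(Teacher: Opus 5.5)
Your proof is correct and follows the paper's argument essentially step for step. You use Lemma \ref{lem3.1} to get an $F$-harmonic $v$ with $0\in\mathscr{C}(v)$, then the universal $C^{1,\alpha_0}$ estimate to bound $\sup_{B_\rho}|v|\le C\rho^{1+\alpha_0}$, then choose $\rho$ with $C\rho^{\alpha_0-\alpha}\le\frac12$ and set $\varepsilon=\frac12\rho^{1+\alpha}$. Your explicit check that $\|v\|_{L^\infty(B_{1/2})}\le 2$, which keeps the constant uniform over the limiting $v$ and $F_\infty$, is a detail the paper leaves implicit.
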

\begin{proof}
	Let $\varepsilon>0$ be a number to be chosen later. According to Lemma \ref{lem3.1}, we can find a $\delta>0$ and
	a $F$-harmonic function $v$ such that $0\in \mathscr{C}(v)$ and
	\begin{equation}\label{33bijintiaohe}
		\sup\limits_{x\in B_{1/2}}|u(x)-v(x)|\leq \varepsilon.
	\end{equation}
	Applying the optimal $C_{loc}^{1,\alpha_{0}}$ regularity for $v$ (see e.g., \cite{Caffarelli1989,Teixeira2014}), together with the fact $v(0)=|D v(0)|=0$, we derive
\begin{equation*}
	\sup\limits_{x\in B_{\rho}}|v(x)|\leq C\rho^{1+\alpha_{0}}\quad {\rm for\;any}\;\rho\in\left(0,\frac{1}{2}\right)
\end{equation*}
with a universal constant $C=C(d,\lambda,\Lambda)>0$.
This along with \eqref{33bijintiaohe} yields that
\begin{equation}\label{33zhengzexingdiyibu}
	\sup\limits_{x\in B_{\rho}}\Abs{u(x)}\leq \sup\limits_{x\in B_{\rho}}\Abs{u(x)-v(x)}+\sup\limits_{x\in B_{\rho}}\Abs{v(x)}\leq \varepsilon+C\rho^{1+\alpha_{0}}.
\end{equation}	
Now, fixed an exponent $\alpha<\alpha_{0}$, we choose $\rho$ and $\varepsilon$ as
\begin{equation*}
	0<\rho\leq \left(\frac{1}{2C}\right)^{1/(\alpha_{0}-\alpha)}\quad {\rm and}\quad \varepsilon:=\frac{1}{2}\rho^{1+\alpha}.
\end{equation*}
Then it follows from \eqref{33zhengzexingdiyibu} that
\begin{equation*}
	\sup\limits_{x\in B_{\rho}}\Abs{u(x)}\leq \rho^{1+\alpha}.
\end{equation*}	
This completes the proof of the desired result.	
\end{proof}
\begin{lemma}\label{lem3.3}
		 Suppose that hypotheses \eqref{A0}-\eqref{A5} are in force.
		 Let  $u\in C(B_{1})$ be a normalized viscosity solution of \eqref{3aazhumodel} with $u(0)=0$ and $\alpha$ be as in \eqref{3aerfadefanwei}.
	 There exist constants $\delta>0$ and $0<\rho<\frac{1}{2}$ both of which are the same as
	 those in Lemma \ref{lem3.2}, such that if
		 \begin{equation*}
		 |D u(0)|\leq \delta t^{\alpha}\quad {\rm for}\;t\in (0,\rho],
		 \end{equation*}
		 then it holds that
	\begin{equation*}
		\sup\limits_{x\in B_{t}}\Abs{u(x)}\leq Ct^{1+\alpha}
	\end{equation*}
for a universal constant $C>0$.
\end{lemma}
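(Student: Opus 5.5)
We argue by iterating Lemma \ref{lem3.2} along the dyadic-type scales $\rho^{k}$. First we arrange the smallness required in Lemma \ref{lem3.2}. By \eqref{A1}, \eqref{A4}, \eqref{A5}, a preliminary rescaling $x\mapsto u(\tau_0 x)/K$ with $\tau_0$ small (as in Subsection \ref{scaling}) makes
\[
\|{\rm osc}_F\|_{L^\infty(B_1)},\quad \|f\|_{L^\infty(B_1)},\quad \|h\|_{L^\infty(B_1)}
\]
at most $\delta$. Here $\delta,\rho$ are those of Lemma \ref{lem3.2} for the chosen $\alpha$; if $\alpha=\alpha_1\ge\alpha_0$ is excluded by \eqref{3aerfadefanwei}, we have $\alpha<\alpha_0$. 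This costs only universal constants.

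\textbf{Induction.} Let $k\ge 0$ be the integer with $\rho^{k+1}<t\le\rho^{k}$. We show by induction on $j=0,\dots,k$ that
\[
\sup_{B_{\rho^{j}}}|u|\le \rho^{j(1+\alpha)}.
\]
The case $j=0$ is normalization. Assume it holds for $j<k$ and define
\[
u_j(x):=\frac{u(\rho^{j}x)}{\rho^{j(1+\alpha)}},
\]
so that $\|u_j\|_{L^\infty(B_1)}\le1$, $u_j(0)=0$. This is the scaling of Subsection \ref{scaling} with $\tau=\rho^{j}$ and $K=\rho^{j(1+\alpha)}$. Note $K<1$, so the subsection's bounds, which used $K\ge1$, must be redone. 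We have $u_j$ solving \eqref{3shensuofangcheng} with
\[
\tilde f(x)=\rho^{j[2+p(\rho^j x)-(1+\alpha)(1+p(\rho^j x))]}f(\rho^{j}x),\qquad
\tilde h(x)=\rho^{j[2+p-m-(1+\alpha)(1+p-m)]}h(\rho^{j}x).
\]
Using $|f(\rho^jx)|\le \delta\rho^{j\gamma}$ (after the preliminary scaling), $|\tilde f|\le\delta$ requires
\[
1+\gamma-\alpha(1+p(\cdot))\ge0 .
\]
This holds since $\alpha\le \frac{1+\gamma}{1+q_{max}}\le\frac{1+\gamma}{1+p}$. Likewise $|\tilde h|\le\delta$ requires
\[
1+\beta-\alpha(1+p-m)\ge0,
\]
guaranteed by $\alpha\le\frac{1+\beta}{1+q_{max}-m}$. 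Also ${\rm osc}_{\tilde F}$ only improves.

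\textbf{The rescaled coefficient.} The new $\tilde a=(\tau/K)^{p-q}a(\tau x)=\rho^{-j\alpha(p-q)}a$ may blow up, since $p-q\le0$. I expect this is harmless because Lemma \ref{lem3.1} is uniform in $a_j\ge0$ (its compactness relied on \cite{Huo2026} estimates uniform in the degeneracy). I would double-check that those $C^{1,\alpha'}$ bounds are indeed independent of $\|a\|_\infty$; if not, one instead divides $\Phi$ appropriately. This is the main delicate point.

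\textbf{Gradient hypothesis.} We have
\[
|Du_j(0)|=\rho^{-j\alpha}|Du(0)|\le \delta t^{\alpha}\rho^{-j\alpha}\le\delta,
\]
since $t\le\rho^{k}\le\rho^{j}$. So Lemma \ref{lem3.2} gives
\[
\sup_{B_\rho}|u_j|\le\rho^{1+\alpha},
\]
that is, the claim at $j+1$. At step $k$ the induction hypothesis holds but the gradient bound for the next step may fail, which is why we stop there.

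\textbf{Conclusion.} From
\[
\sup_{B_t}|u|\le\sup_{B_{\rho^{k}}}|u|\le\rho^{k(1+\alpha)}=\rho^{-(1+\alpha)}\rho^{(k+1)(1+\alpha)}\le\rho^{-(1+\alpha)}t^{1+\alpha},
\]
we get $C=\rho^{-(1+\alpha)}$, times the universal factor from the initial normalization. The obstacle, besides the uniformity in $a$, is the exponent bookkeeping with the variable $p(\rho^jx)$: one must use worst-case bounds $p\le q_{max}$ in the right direction, since $\rho<1$ and the exponents must stay nonnegative. This is exactly where the definition of $\alpha_1$ with $q_{max}$ enters.
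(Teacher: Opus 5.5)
Your proposal is correct and follows the paper's proof. The steps match: a preliminary rescaling to smallness, then Lemma \ref{lem3.2} applied to $u_j(x)=u(\rho^j x)/\rho^{j(1+\alpha)}$ with $\alpha\le\alpha_1$ controlling $f_j,h_j$ and $|Du_j(0)|=\rho^{-j\alpha}|Du(0)|\le\delta$, and finally interpolation for $\rho^{k+1}<t\le\rho^k$. Your use of the weighted bounds $|f(x)|\le\delta|x|^{\gamma}$ and $|h(x)|\le\delta|x|^{\beta}$ is in fact what is needed for $\|f_j\|_{L^\infty},\|h_j\|_{L^\infty}\le\delta$. These bounds are available by taking $\tau_0$ small in terms of $K_0,K_1$, and \eqref{331}--\eqref{332} leave this point implicit.

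The ``main delicate point'' you flag comes from misreading a sign. In your own formula $\tilde a=\rho^{-j\alpha(p-q)}a(\rho^j\cdot)$, the exponent $-j\alpha(p-q)=j\alpha(q-p)$ is nonnegative and $\rho<1$. Hence $\tilde a\le a(\rho^j\cdot)$ only shrinks, exactly as for the paper's $a_k$, and no further uniformity in $\|a\|_{L^\infty}$ is required.
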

\begin{proof}
	We verify this claim by induction argument.
	Observe that, by making use of the scaling features presented in subsection \ref{scaling}, we may suppose that
	\begin{equation}\label{smalljiashe}
		\max\left\{\|{\rm osc}_{{F}}\|_{L^{\infty}\left(B_{1}\right)},\|f\|_{L^{\infty}(B_{1})},\|h\|_{L^{\infty}(B_{1})}\right\}\leq \delta
	\end{equation}
for $\delta$ coming from Lemma \ref{lem3.1}. In fact, we define rescaled function $\tilde{u}:B_{1}\rightarrow \mathbb{R}$ by $\tilde{u}(x)=\frac{u(\tau x)}{K}$
with
\begin{equation*}
	K:=1+\|u\|_{L^{\infty}\left(B_{1}\right)}
\end{equation*}
and
\begin{equation*}
	\tau:=
		 \min\left\{1,\left(\frac{\delta}{C_{F}}\right)^{\frac{1}{\theta}},\left(\frac{\delta}{\|f\|_{L^{\infty}(B_{1})}}\right)^{\frac{1}{2+p_{min}}},\left(\frac{\delta}{\|h\|_{L^{\infty}(B_{1})}}\right)^{\frac{1}{2+p_{min}-m}}\right\}.
\end{equation*}
Then $\tilde{u}$ satisfies \eqref{3shensuofangcheng} with $\tilde{u}(0)=0$ and $\|\tilde{u}\|_{L^{\infty}\left(B_{1}\right)}\leq 1$.
In addition, a direct calculation yields that	
\begin{equation*}
	\|{\rm osc}_{\tilde{F}}\|_{L^{\infty}\left(B_{1}\right)}= \|{\rm osc}_{{F}}\|_{L^{\infty}\left(B_{\tau}\right)}\leq C_{F}\tau^{\theta},
\end{equation*}
\begin{equation*}
	\|\tilde{f}\|_{L^{\infty}\left(B_{1}\right)}\leq \frac{\tau^{2+p_{min}}}{K^{1+p_{min}}}\|{f}\|_{L^{\infty}\left(B_{1}\right)},
\quad
	\|\tilde{h}\|_{L^{\infty}\left(B_{1}\right)}\leq \frac{\tau^{2-m+p_{min}}}{K^{1-m+p_{min}}}\|{h}\|_{L^{\infty}\left(B_{1}\right)}.
\end{equation*}
Then it follows from the choice of $K$ and $r$ above that
\begin{equation*}
	\max\left\{\|{\rm osc}_{\tilde{F}}\|_{L^{\infty}\left(B_{1}\right)},\|\tilde{f}\|_{L^{\infty}\left(B_{1}\right)},\|\tilde{h}\|_{L^{\infty}\left(B_{1}\right)}\right\}
	\leq \delta.
\end{equation*}		
	
First of all, we need to justify that for
all $k\in\mathbb{N}$, it holds that	if
	\begin{equation*}
	|D u(0)| \leq \delta \rho^{k\alpha},
\end{equation*}
then
	\begin{equation}\label{3111aaa}
		\sup\limits_{x\in B_{\rho^{k}}}\Abs{u(x)}\leq \rho^{k(1+\alpha)}.
	\end{equation}
We argue by finite induction. For $k=1$, \eqref{3111aaa} follows immediately from Lemma \ref{lem3.2}. Suppose that \eqref{3111aaa} holds true for $j=1,2,...,k$. Now we are going to prove \eqref{3111aaa} for $j=k+1$. To this end, we define  $u_{k}:B_{1}\rightarrow \mathbb{R}$ by
	\begin{equation*}
	u_{k}(x):=\frac{u\left(\rho^{k}x\right)}{\rho^{k(1+\alpha)}}.
\end{equation*}
We can readily check that $u_{k}$ solves
\begin{equation*}
	\Phi_{k}(D{u_{k}}, x){F}_{k}(D^2 {u_{k}}, x)+{h_{k}}(x)\Abs{D{u_{k}}}^{m}= {f_{k}}(x) \quad \text{in} \quad  B_{1}
\end{equation*}
in the viscosity sense, where
\begin{align*}
	{F_{k}}(X,x):=&\rho^{k(1-\alpha)}F\left(\rho^{k(\alpha-1)}X,\rho^{k}x\right),\\
	{\Phi_{k}}(\xi,x):=&\rho^{-k\alpha p_{k}(x)}\Phi\left(\rho^{k\alpha}\xi,\rho^{k}x\right)\;{\rm satisfies}\;\eqref{3atuihuaxing}\;{\rm with}\\
	p_{k}(x)&:=p(\rho^{k}x),\quad q_{k}(x):=q(\rho^{k}x), \quad
	a_{k}(x):=\rho^{k\alpha(q_{k}(x)-p_{k}(x))}a(\rho^{k}x),\\
	{h_{k}}(x):=&\rho^{k(1-\alpha(1+p_{k}(x)-m))}h(\rho^{k}x)
	,\quad
	{f_{k}}(x):=\rho^{k(1-\alpha(1+p_{k}(x)))}f(\rho^{k}x).
\end{align*}
It is clear that $F_{k}$ is also a uniformly $(\lambda,\Lambda)$-elliptic operator and
\begin{equation*}
	\|{\rm osc}_{{F_{k}}}\|_{L^{\infty}\left(B_{1}\right)}=\|{\rm osc}_{{F}}\|_{L^{\infty}\left(B_{\rho^{k}}\right)}\leq \|{\rm osc}_{{F}}\|_{L^{\infty}\left(B_{1}\right)}.
\end{equation*}
Moreover, $0\leq p_{min}\leq p_{k}(x)\leq q_{k}(x)\leq q_{max}<\infty.$
Applying $|h(x)|\leq K_{0}|x|^{\beta}$ and $|f(x)|\leq K_{1}|x|^{\gamma}$ for $x\in B_{1}$, in combination with $\alpha\leq  \min\left\{\frac{1+\gamma}{1+q_{max}},\frac{1+\beta}{1+q_{max}-m}\right\}$, we deduce that
\begin{equation}\label{331}
	|{f_{k}}(x)|\leq K_{1} \rho^{k(1-\alpha(1+p_{k}(x))+\gamma)}|x|^{\gamma}
	\leq K_{1} \rho^{k(1-\alpha(1+q_{max})+\gamma)}|x|^{\gamma}\leq K_{1}|x|^{\gamma},
\end{equation}
\begin{equation}\label{332}
	|{h_{k}}(x)|\leq  K_{0} \rho^{k(1-\alpha(1+p_{k}(x)-m)+\beta)}|x|^{\beta}
	\leq K_{0} \rho^{k(1-\alpha(1+q_{max}-m)+\beta)}|x|^{\beta}\leq K_{0}|x|^{\beta}.
\end{equation}
In addition, via the hypotheses of induction, we know $|D u(0)| \leq \delta \rho^{k\alpha}$, and so we
get $$|D u_{k}(0)|=\rho^{-k\alpha}|Du(0)| \leq \delta.$$ At
this point, in view of the assumption \eqref{smalljiashe}, $u_{k}$ falls into the framework of Lemma \ref{lem3.2}, and hence it yields
\begin{equation*}
	\sup\limits_{x\in B_{\rho}}\Abs{u_{k}(x)}\leq \rho^{1+\alpha}.
\end{equation*}
Scaling back, we reach that
\begin{equation*}
	\sup\limits_{x\in B_{\rho^{k+1}}}\Abs{u(x)}\leq \rho^{(k+1)(1+\alpha)}.
\end{equation*}
By now, the proof of \eqref{3111aaa} is finished.

Finally, for $t\in(0,\rho]$, there exists an integer $j\in \mathbb{N}$ such that $\rho^{j+1}<t\leq \rho^{j}$,
applying  \eqref{3111aaa} to obtain
\begin{equation*}
	\sup\limits_{x\in B_{t}}\Abs{u(x)}\leq
	\sup\limits_{x\in B_{\rho^{j}}}\Abs{u(x)}\leq \rho^{j(1+\alpha)}\leq \rho^{-(1+\alpha)}t^{1+\alpha},
\end{equation*}
provided $|Du(0)|\leq \delta t^{\alpha}$.
This completes the proof of the desired result.
\end{proof}

In the end, with the help of Lemma \ref{lem3.3}, we are in a position to complete the proof of Theorem \ref{main}.
\begin{proof}[Proof of Theorem \ref{main}] As previously explained in Subsection \ref{scaling}, we can assume that $u(0)=0$, and $\|{u}\|_{L^{\infty}\left(B_{1}\right)}\leq 1$
by translation and normalization. We are going to
prove this conclusion by implementing a dichotomy argument, which is divided into two cases. Let $\delta,\rho>0$ be two universal constants coming from Lemma \ref{lem3.3}.

{\bf Case 1.} $|Du(0)|\leq \delta \rho^{\alpha}$. Set
	\begin{equation}\label{31112changshujifa}
		\kappa:=\left(\frac{|Du(0)|}{\delta}\right)^{1/\alpha}.
	\end{equation}
Given $0<t\leq \rho$. We consider the following two subcases.\\
{\bf Case 1.1} $\kappa\leq t\leq \rho$. It follows from \eqref{31112changshujifa} that
\begin{equation*}
	|Du(0)|=\delta \kappa^{\alpha} \leq \delta t^{\alpha}.
\end{equation*}
Consequently, exploiting Lemma \ref{lem3.3} to obtain
\begin{equation*}
	\sup\limits_{x\in B_{t}}\Abs{u(x)}\leq Ct^{1+\alpha}
\end{equation*}
for a universal constant $C>0$. Combining the last two displays with the triangle inequality, we derive
\begin{equation*}
	\sup\limits_{x\in B_{t}}\Abs{u(x)-Du(0)\cdot x}\leq
	\sup\limits_{x\in B_{t}}\Abs{u(x)}+|Du(0)|t\leq (C+\delta)t^{1+\alpha},
\end{equation*}
which means that $u$ is of class $C^{1,\alpha}$ at 0.\\
{\bf Case 1.2} $0<t<\kappa\leq \rho$. In this case, we consider the scaled function
\begin{equation*}
	u_{\kappa}(x):=\frac{u(\kappa x)}{\kappa^{1+\alpha}}, \quad x\in B_{1}.
\end{equation*}
It follows from \eqref{31112changshujifa} that
$|Du(0)|=\delta \kappa^{\alpha}$. Again, we employ Lemma \ref{lem3.3} to derive
\begin{equation*}
	\sup\limits_{x\in B_{\kappa}}\Abs{u(x)}\leq C\kappa^{1+\alpha}.
\end{equation*}
Then it follows that
\begin{equation*}
	\sup\limits_{x\in B_{1}}\Abs{u_{\kappa}(x)}=\sup\limits_{x\in B_{1}}\Abs{\frac{u(\kappa x)}{\kappa^{1+\alpha}}}=\frac{1}{\kappa^{1+\alpha}}\sup\limits_{x\in B_{\kappa}}\Abs{u(x)}
	\leq C.
\end{equation*}
Therefore, we can readily examine that $u_{\kappa}\in C(B_{1})$ is a bounded viscosity solution to
\begin{equation} \label{switchequation}
	{\Phi}_{\kappa}(D{u}_{\kappa}, x)	{F}_{\kappa}(D^2 {u}_{\kappa}, x)+	{h}_{\kappa}(x)\Abs{D{u}_{\kappa}}^{m}= 	{f}_{\kappa}(x) \quad \text{in} \quad  B_{1},
\end{equation}
where ${F}_{\kappa}$ has the same uniform ellipticity constants as the operator $F$,
\begin{align*}
		{{\Phi}_{\kappa}}(\xi,x):=&\kappa^{-\alpha {p}_{\kappa}(x)}\Phi\left(\kappa^{\alpha}\xi,\kappa x\right)\;{\rm satisfies}\;\eqref{3atuihuaxing}\;{\rm with}\\
	{p}_{\kappa}(x)&:=p(\kappa x),\quad {q}_{\kappa}(x):=q(\kappa x), \quad
	{a}_{\kappa}(x):=\kappa^{\alpha({q}_{\kappa}(x)-p_{\kappa}(x))}a(\kappa x),\\
	{h}_{\kappa}(x):=&\kappa^{1-\alpha(1+p_{\kappa}(x)-m)}h(\kappa x),\quad
	{f}_{\kappa}(x):=\kappa^{1-\alpha(1+p_{\kappa}(x))}f(\kappa x).
\end{align*}	
By the same arguments as in \eqref{331} and \eqref{332}, we can deduce that
\begin{equation*}
	|{h}_{\kappa}(x)|\leq K_{0}|x|^{\beta},\quad  |{f}_{\kappa}(x)|\leq K_{1}|x|^{\gamma}.
\end{equation*}
Since the source term $f_{\kappa}$ and function $h_{\kappa}$ have a universal bound, the interior
$C^{1,\alpha^{\prime}}$-regularity for $u_{\kappa}$ follows from \cite[Theorem 1.1]{Huo2026}. In addition, let us recall the fact that
$|Du_{\kappa}(0)|=\Abs{\frac{Du(0)}{\kappa^{\alpha}}}=\delta>0.$ Since $u_{\kappa}\in C_{loc}^{1,\alpha^{\prime}}(B_{1})$, there exists a universal constant $C> 0$ such that for all $x\in B_{1/2}$,
$$\Abs{Du_{\kappa}(x)-Du_{\kappa}(0)}\leq C|x|^{\alpha^{\prime}},$$
and further
$$ \delta-C|x|^{\alpha^{\prime}}\leq \Abs{Du_{\kappa}(x)}\leq \delta+C|x|^{\alpha^{\prime}}.$$
Then we may take a small universal radius $r>0$, independent of $\kappa$, such that
$$\frac{\delta}{2}\leq |Du_{\kappa}(x)| \leq 2\delta \quad {\rm in}\;\, B_{r}.$$
Hence, $u_{\kappa}$ satisfies the following uniformly elliptic equation with a universally bounded source term on the right-hand side
\begin{equation*}
		F_{\kappa}(D^2 u_{\kappa}, x)=\Phi_{\kappa}(Du_{\kappa}, x)^{-1}\bigg[f_{\kappa}(x)- h_{\kappa}(x)\Abs{Du_{\kappa}}^{m} \bigg] \quad \text{in} \quad  B_{r}.
\end{equation*}
As a consequence, according to the regularity results established in \cite[Section 4]{Teixeira2014}, solution to the above equation is almost as regular as a $F$-harmonic function,
i.e., $u_{\kappa}\in C_{loc}^{1,\alpha_{0}^{-}}(B_{r})$. In particular, we obtain
\begin{equation*}
	\sup\limits_{x\in B_{\iota}}\Abs{u_{\kappa}(x)-Du_{\kappa}(0)\cdot x}\leq C\iota^{1+\alpha}
\end{equation*}
for every $0<\iota\leq \frac{r}{2}$ and $0<\alpha<\alpha_{0}$. Scaling back, it yields that
\begin{equation}\label{3zuoquyujielunchengli}
	\sup\limits_{x\in B_{t}}\Abs{u(x)-Du(0)\cdot x}\leq Ct^{1+\alpha}
\end{equation}
for every $0<t\leq \frac{\kappa r}{2}$. It remains to show that the claim \eqref{3zuoquyujielunchengli} also holds on interval $\left(\frac{\kappa r}{2},\kappa\right)$. When $t\in\left(\frac{\kappa r}{2},\kappa\right)$, we apply Case 1.1 with $t=\kappa$ to arrive at
\begin{equation*}
	\begin{split}
			\sup\limits_{x\in B_{t}}\Abs{u(x)-Du(0)\cdot x}&\leq \sup\limits_{x\in B_{\kappa}}\Abs{u(x)-Du(0)\cdot x}\\
			& \leq C\kappa^{1+\alpha}=C \left(\frac{2}{r}\right)^{1+\alpha}\left(\frac{\kappa r}{2}\right)^{1+\alpha}\\
			&\leq C \left(\frac{2}{r}\right)^{1+\alpha}t^{1+\alpha}.
	\end{split}
\end{equation*}
At this point, we complete the proof of Case 1.

{\bf Case 2.} $|Du(0)|>\delta \rho^{\alpha}$. In this scenario, we consider an auxiliary function
$$\hat{u}(x):=\frac{\delta \rho^{\alpha}}{|Du(0)|}u(x)$$
and then $|D\hat{u}(0)|=\delta \rho^{\alpha}$, which is back to Case 1. The proof is complete now.
\end{proof}
\section{Schauder-type regularity}
\label{section4}
In order to give the proof of Theorem \ref{3aamain2}, we start off with a new flatness improvement result at the local extrema.
\begin{lemma}\label{3aalem4.1}
Let the conditions \eqref{A0}-\eqref{A3} be in force. Let  $u\in C(B_{1})$ be a normalized viscosity solution of \eqref{3aazhumodel}. Assume that $x_{0}\in B_{1/2}$ is a local extremum, i.e., $u(x_{0})\leq u(x)$ or $u(x_{0})\geq u(x)$ in $B_{\nu}(x_{0})$ for some $\nu\in(0,1/4)$.  Given $\varepsilon>0$, there exists $\delta=\delta(d,\lambda,\Lambda,\varepsilon)>0$ such that if
\begin{equation*}
	\max\left\{\|{\rm osc}_{{F}}\|_{L^{\infty}\left(B_{1}\right)},\|f\|_{L^{\infty}(B_{1})},\|h\|_{L^{\infty}(B_{1})}\right\}\leq\delta,
\end{equation*}
then it holds
\begin{equation*}
	\sup\limits_{x\in B_{\nu}(x_{0})}\Abs{u(x)-u(x_0)}\leq \varepsilon.
\end{equation*}
\end{lemma}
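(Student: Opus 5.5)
The plan is a compactness argument by contradiction, in the spirit of Lemma \ref{lem3.1}. The key new input is the strong maximum principle for the limiting $F$-harmonic function, which replaces the condition $0\in\mathscr{C}(v)$.

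Suppose the claim fails for some $\varepsilon_{0}>0$. Then we obtain sequences $F_{j},\Phi_{j},h_{j},f_{j},u_{j}$ satisfying \eqref{model333}, with $\|u_{j}\|_{L^{\infty}(B_{1})}\leq 1$ and
\[
\max\left\{\|{\rm osc}_{F_{j}}\|_{L^{\infty}(B_{1})},\|f_{j}\|_{L^{\infty}(B_{1})},\|h_{j}\|_{L^{\infty}(B_{1})}\right\}\leq \tfrac1j .
\]
We also obtain points $x_{j}\in B_{1/2}$ and radii $\nu_{j}\in(0,1/4)$ such that $x_{j}$ is a local extremum of $u_{j}$ in $B_{\nu_{j}}(x_{j})$, and yet $\sup_{B_{\nu_{j}}(x_{j})}|u_{j}-u_{j}(x_{j})|>\varepsilon_{0}$. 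Since $\delta$ must not depend on $\nu$ or $x_{0}$, these are allowed to vary with $j$. Passing to a subsequence, we may assume:
\begin{itemize}
\item each $x_{j}$ is a minimum in $B_{\nu_{j}}(x_{j})$ (otherwise replace $u_j$ by $-u_j$, which satisfies an equation of the same structure);
\item $x_{j}\to x_{\infty}\in \overline{B_{1/2}}$ and $\nu_{j}\to\nu_{\infty}\in[0,1/4]$.
\end{itemize}
Note that all the balls $B_{\nu_{j}}(x_{j})$ lie in $B_{3/4}$.

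As in Lemma \ref{lem3.1}, the uniform $C^{1,\alpha'}_{loc}$ estimates of \cite[Theorem 1.1]{Huo2026} give equicontinuity on $\overline{B_{3/4}}$ in the $C^{1}$-topology. By Arzel\`a--Ascoli, $u_{j}\to u_{\infty}$ uniformly on $\overline{B_{3/4}}$ along a subsequence, and $F_{j}\to F_{\infty}$ locally uniformly, where $F_{\infty}$ is $(\lambda,\Lambda)$-elliptic with frozen coefficients. The stability argument of \cite[Lemma 4.1]{Huo2026} then shows that $F_{\infty}(D^{2}u_{\infty})=0$ in $B_{3/4}$. If $\nu_{\infty}=0$, equicontinuity alone gives $\sup_{B_{\nu_{j}}(x_{j})}|u_{j}-u_{j}(x_{j})|\to0$, which is a contradiction. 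So we may assume $\nu_{\infty}>0$.

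Fix any $y\in B_{\nu_{\infty}}(x_{\infty})$. For $j$ large we have $y\in B_{\nu_{j}}(x_{j})$, hence $u_{j}(x_{j})\leq u_{j}(y)$. Uniform convergence together with $x_j\to x_\infty$ yields $u_{\infty}(x_{\infty})\leq u_{\infty}(y)$. Thus $x_{\infty}$ is an interior minimum of $u_{\infty}$ on the ball $B_{\nu_{\infty}}(x_{\infty})$.

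Now $u_{\infty}$ is in particular a viscosity supersolution of $\mathcal{M}^{-}_{\lambda,\Lambda}(D^{2}u_{\infty})\leq 0$, since $F_{\infty}(0)=0$. The strong maximum principle for Pucci extremal operators (\cite[Chapter 4]{Caff1}) applies on the connected set $B_{\nu_{\infty}}(x_{\infty})$ and shows that $u_{\infty}\equiv u_{\infty}(x_{\infty})$ on its closure. Finally,
\[
\sup_{B_{\nu_{j}}(x_{j})}|u_{j}-u_{j}(x_{j})|
\leq 2\|u_{j}-u_{\infty}\|_{L^{\infty}(B_{3/4})}
+\sup_{B_{\nu_{j}}(x_{j})}|u_{\infty}-u_{\infty}(x_{\infty})|
+|u_{\infty}(x_{\infty})-u_{\infty}(x_{j})| .
\]
The first and third terms tend to zero by uniform convergence and continuity. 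The middle term tends to zero by uniform continuity of $u_{\infty}$, since $u_{\infty}$ is constant on $\overline{B_{\nu_{\infty}}(x_{\infty})}$ and the balls $B_{\nu_{j}}(x_{j})$ converge to $B_{\nu_\infty}(x_\infty)$ in Hausdorff distance. This contradicts the lower bound $\varepsilon_{0}$.

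The main obstacle is the passage of the extremum property to the limit while $x_{j}$ and $\nu_{j}$ both move. It is handled by the pointwise argument above, combined with the fact that uniform convergence holds on the fixed set $B_{3/4}$ containing all the balls.
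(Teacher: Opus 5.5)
Your proof is correct and follows the same route as the paper: a contradiction/compactness argument using the uniform $C^{1,\alpha'}$ estimates and stability to get an $F_\infty$-harmonic limit, passing the extremum property to the limit point $x_\infty$, and then applying the strong maximum principle. It is slightly more careful than the paper's version, because letting the radii $\nu_j$ vary (with the case $\nu_\infty=0$ handled separately) is what actually makes $\delta$ independent of $\nu$ as stated, and you only claim constancy of $u_\infty$ on the limiting ball $B_{\nu_\infty}(x_\infty)$ rather than globally.
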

\begin{proof}
	 With no loss of generality, let us assume $u(x_{0})$ is a local minimum. The proof is based on a contradiction argument. If the claim fails, then there exist $\varepsilon_{0}>0$ and sequences
	of functions $\{F_{j}\}_{j\in \mathbb{N}}$, $\{\Phi_{j}\}_{j\in \mathbb{N}}$, $\{f_{j}\}_{j\in \mathbb{N}}$, $\{u_{j}\}_{j\in \mathbb{N}}$, $\{h_{j}\}_{j\in \mathbb{N}}$, and a sequence $\{x_{j}\}\subset B_{1/2}$ of local minimum of $u_{j}$ such that
	\begin{equation*}
		\Phi_{j}({Du_{j}}, x)F_{j}(D^2 u_{j}, x)+h_{j}(x)\abs{Du_{j}}^{m} =f_{j}(x) \quad  \text{in} \quad B_{1}
	\end{equation*}	
	with $ \|u_{j}\|_{L^{\infty}(B_{1})}\leq 1$ and
	\begin{equation*}
		\max\left\{\|{\rm osc}_{{F_{j}}}\|_{L^{\infty}\left(B_{1}\right)},\|f_{j}\|_{L^{\infty}(B_{1})},\|h_{j}\|_{L^{\infty}(B_{1})}\right\}\leq \frac{1}{j},
	\end{equation*}
	where operator $F_{j}:S^{d}\times B_{1}\rightarrow \mathbb{R}$ is uniformly $(\lambda,\Lambda)$-elliptic, $\Phi_{j}$ satisfies \eqref{3atuihuaxing} and \eqref{3aaatuihuatiaojian2} with
	$$0\leq p_{min}\leq p_{j}(x)\leq q_{j}(x)\leq q_{max}<\infty,$$
	$$p_{j}(\cdot),q_{j}(\cdot)\in C(B_{1}) \quad {\rm and}\quad 0\leq a_{j}(\cdot)\in C(B_{1}).$$
	However, it holds
	\begin{equation}\label{42maodun}
		\sup\limits_{x\in B_{\nu}(x_{j})}\Abs{u_{j}(x)-u_{j}(x_j)}>\varepsilon_{0}\quad {\rm for\; any \;} j\in \mathbb{N}.
	\end{equation}
	
By the same arguments as in Lemma \ref{lem3.1}, we can conclude that
	$u_{\infty}$ is a normalized viscosity solution of
	\begin{equation*}
		F_{\infty}(D^{2}u_{\infty})=0 \quad {\rm in}\quad  B_{3/4}.
	\end{equation*}
Additionally, $x_{j}\rightarrow x_{\infty}$. Since $u_{j}$ converges locally uniformly in $B_{1}$ to $u_{\infty}$,  we know that $x_{\infty}$ is a
local minimum of $u_{\infty}$. Thus, by the strong maximum principle for uniformly elliptic
operators (see \cite[Proposition 4.9]{Caff1}), we have $u_{\infty}\equiv const$. This clearly leads to a contradiction with \eqref{42maodun} for $j$ sufficiently large. 
\end{proof}
With the aid of Lemma \ref{3aalem4.1}, we now show higher regularity of solution to \eqref{3aazhumodel}.
\begin{proof}[Proof of Theorem \ref{3aamain2}]
 As argued before, we can assume that $u(0)=0$, and $\|{u}\|_{L^{\infty}\left(B_{1}\right)}\leq 1$
by translation and normalization. Let us also assume 0 is a local minimum of $u$. For constant $0<\varrho<1$ to be chosen later, consider the scaled function
$$w_{1}(x):=u(\varrho x),\quad x\in B_{1}.$$
Then we immediately verify that $w_{1}$ solves
\begin{equation*}
	\Phi_{\varrho}({D{w_{1}}}, x){F}_{\varrho}(D^2 {w_{1}}, x)+{h_{\varrho}}(x)\Abs{D{w_{1}}}^{m}= {f_{\varrho}}(x) \quad \text{in} \quad  B_{1}
\end{equation*}
in the viscosity sense, where
\begin{align*}
	{F_{\varrho}}(X,x):=&\varrho^{2}F\left(\varrho^{-2}X,\varrho x\right),\\
	{\Phi_{\varrho}}(\xi,x):=&\varrho^{ p_{\varrho}( x)}\Phi\left(\varrho^{-1}\xi,\varrho x\right)\;{\rm satisfies}\;\eqref{3atuihuaxing}\;{\rm with}\\
		p_{\varrho}(x):&=p(\varrho x),\quad q_{\varrho}(x):=q(\varrho x),\quad
	a_{\varrho}(x):=\varrho^{p_{\varrho}(x)-q_{\varrho}(x)}a(\varrho x),\\
	{h_{\varrho}}(x):=&\varrho^{2+p_{\varrho}( x)-m}h(\varrho x),\quad
	{f_{\varrho}}(x):=\varrho^{2+p_{\varrho}( x)}f(\varrho x).
\end{align*}		
Through a straightforward calculation, we get
\begin{equation*}
	\|{\rm osc}_{{F_{\varrho}}}\|_{L^{\infty}\left(B_{1}\right)}=\|{\rm osc}_{{F}}\|_{L^{\infty}\left(B_{\varrho}\right)}\leq C_{F}\varrho^{\theta}.
\end{equation*}
Moreover, $0\leq p_{min}\leq p_{\varrho}(x)\leq q_{\varrho}(x)\leq q_{max}<\infty.$ It follows from \eqref{A4}-\eqref{A5} that
\begin{align*}
	|{f_{\varrho}}(x)|&\leq K_{1} \varrho^{2+p_{min}+\gamma}|x|^{\gamma}\leq K_{1} \varrho^{2+p_{min}+\gamma}, \\
	|{h_{\varrho}}(x)|&\leq K_{0} \varrho^{2+p_{min}-m+\beta}|x|^{\beta}\leq K_{0} \varrho^{2+p_{min}-m+\beta}.
\end{align*}
Now, we fix $\varepsilon:=\nu^{2+\mu}$ and let $\delta>0$ be the corresponding smallness condition from Lemma \ref{3aalem4.1}. We proceed by selecting $\varrho$
\begin{equation}\label{canshuxuanqu}
0<\varrho\leq \min\left\{\left(\frac{\delta}{C_{F}}\right)^{\theta},\left(\frac{\delta}{K_{0}}\right)^{1/(2+p_{min}-m+\beta)},\left(\frac{\delta}{K_{1}}\right)^{1/(2+p_{min}+\gamma)}\right\}.
\end{equation}
With such choice, $w_{1}$ is under the hypotheses of Lemma \ref{3aalem4.1}, and hence it yields
\begin{equation}\label{41}
	\sup\limits_{x\in B_{\nu}}\Abs{w_{1}(x)}\leq \nu^{2+\mu}.
\end{equation}

As usual, we argue by induction to show for all $k\in \mathbb{N}$, it holds that
\begin{equation}\label{disanhanshu}
	\sup\limits_{x\in B_{\nu^{k}}}\Abs{w_{1}(x)}\leq \nu^{k(2+\mu)}.
\end{equation}
The initial induction hypothesis, $k=1$, is precisely  \eqref{41}. Assume \eqref{disanhanshu} has been verified for $j=k$. Now we prove that \eqref{disanhanshu} holds for $j=k+1$. Let us define $$w_{k}(x):=\nu^{-k(2+\mu)}w_{1}(\nu^{k}x),\quad x\in B_{1}.$$
It is clear that 0 is still a local minimum for $w_{k}$. By induction assumption, we have $$\|w_{k}\|_{L^{\infty}(B_{1})}\leq 1.$$ In addition, we can readily check that $w_{k+1}$ solves
\begin{equation} \label{Eq2.1}
	\hat{\Phi}_{{\varrho}}({D w_{k}}, x){\hat{F}}_{\varrho}(D^2 w_{k}, x)+{\hat{h}_{\varrho}}(x)\Abs{Dw_{k}}^{m}= {\hat{f}_{\varrho}}(x) \quad \text{in} \quad  B_{1}
\end{equation}
in the viscosity sense, where
\begin{align*}
	\hat{F}_{\varrho}(X,x):=&\nu^{-k\mu}F_{\varrho}\left(\nu^{k\mu}X,\nu^{k}x\right),\\
	{\hat{\Phi}}_{\varrho}(\xi,x):=&\nu^{-k(1+\mu)\hat{p}_{\varrho}(x)}\Phi_{\varrho}\left(\nu^{k(1+\mu)}\xi,\nu^{k}x\right) \;{\rm satisfies}\;\eqref{3atuihuaxing}\;{\rm with}\\
	\hat{p}_{\varrho}(x)&:={p}_{\varrho}(\nu^{k}x),\quad \hat{q}_{\varrho}(x):={q}_{\varrho}(\nu^{k}x), \quad
	\hat{a}_{\varrho}(x):=\nu^{k(1+\mu)(\hat{q}_{\varrho}(x)-\hat{p}_{\varrho}(x))}{a}_{\varrho}(\nu^{k}x),\\
	\hat{h}_{\varrho}(x):=&\nu^{-k\mu(1+\hat{p}_{\varrho}(x)-m)-k(\hat{p}_{\varrho}(x)-m)}h_{\varrho}(\nu^{k}x),\quad
	\hat{f}_{\varrho}(x):=\nu^{-k\mu(1+\hat{p}_{\varrho}(x))-k\hat{p}_{\varrho}(x)}f_{\varrho}(\nu^{k}x).
\end{align*}		
A direct calculation shows that
\begin{align*}
	{\rm osc}_{{\hat{F}_{\varrho}}}(x,0)
	&=\sup\limits_{M\in S^{d}\setminus \{0\}}\frac{\Abs{
			\nu^{-k\mu}F_{\varrho}\left(\nu^{k\mu}M,\nu^{k}x\right)-\nu^{-k\mu}F_{\varrho}\left(\nu^{k\mu}M,0\right)}}{\|M\|}\\
	&=\sup\limits_{M\in S^{d}\setminus \{0\}}\frac{\Abs{
			F\left(\nu^{k\mu}\varrho^{-2}M,\varrho\nu^{k}x\right)-F\left(\nu^{k\mu}\varrho^{-2}M,0\right)}}{\nu^{k\mu}\varrho^{-2}\|M\|}={\rm osc}_{{F}}(\varrho\nu^{k}x,0),
\end{align*}
then it follows that
\begin{equation*}
	\|{\rm osc}_{{\hat{F}_{\varrho}}}\|_{L^{\infty}\left(B_{1}\right)}=\|{\rm osc}_{{F}}\|_{L^{\infty}\left(B_{\varrho\nu^{k}}\right)}\leq C_{F}\varrho^{\theta}.
\end{equation*}
Next, we estimate the bound of $\hat{h}_{{\varrho}}$ and $\hat{f}_{{\varrho}}$. Applying $\mu=\min\left\{\frac{\gamma-q_{max}}{1+q_{max}},\frac{m+\beta-q_{max}}{1+q_{max}-m}\right\}$, in combination with \eqref{A4} and \eqref{A5}, we deduce that
\begin{align*}
	|\hat{h}_{{\varrho}}(x)|&\leq 
	K_{0}\nu^{-k\mu(1+q_{max}-m)-k(q_{max}-m-\beta)}\varrho^{2+p_{min}-m+\beta}
	\leq K_{0}\varrho^{2+p_{min}-m+\beta},\\
	|\hat{f}_{\varrho}(x)&\leq K_{1}\nu^{-k\mu(1+q_{max})-k(q_{max}-\gamma)}\varrho^{2+p_{min}+\gamma}\leq K_{1}\varrho^{2+p_{min}+\gamma}.
\end{align*}
At this point, in view of \eqref{canshuxuanqu}, $w_{k}$ falls into the framework of Lemma \ref{3aalem4.1}, and hence it yields
\begin{equation}\label{dierhanshu}
	\sup\limits_{x\in B_{\nu}}\Abs{w_{k}(x)}\leq \nu^{2+\mu}.	
\end{equation}
Rescaling \eqref{dierhanshu} back to $w_{1}$ yields
\begin{equation*}
	\sup\limits_{x\in B_{\nu^{(k+1)}}}\Abs{w_{1}(x)}\leq \nu^{(k+1)(2+\mu)}.
\end{equation*}
The induction argument is complete.

Finally, for $0<r\leq \nu\varrho$, there exists $k\in\mathbb{N}$, such that $\nu^{(k+1)}<\frac{r}{\varrho}\leq \nu^{k}$. Applying \eqref{disanhanshu} to obtain
\begin{align*}
	\sup\limits_{x\in B_{r}}\Abs{u(x)}&= 	\sup\limits_{x\in B_{r/\varrho}}\Abs{w_{1}(x)}\leq \sup\limits_{x\in B_{\nu^{k}}}\Abs{w_{1}(x)}\\
	&\leq \nu^{k(2+\mu)}=\nu^{(k+1)(2+\mu)-2-\mu}\\
	&\leq \left(\frac{r}{\varrho}\right)^{2+\mu}\nu^{-2-\mu}\\
	&=\left(\nu\varrho\right)^{2+\mu}r^{2+\mu}:=Cr^{2+\mu},
\end{align*}
where $C=C(d,\lambda,\Lambda,C_{F},K_{0},K_{1},\gamma,\beta,\theta,p_{min},q_{max},m,\nu)$. Therefore, $u$ is $C^{2,\mu}$ differentiable at 0, with $|Du(0)|=|D^{2}u(0)|=0$. The proof is complete now.
\end{proof}
\section*{Acknowledgments}
This work is supported by the National Natural Science Foundation of China (NSFC Grant No.12571103), Natural Science Foundation of Tianjin (Grant No. 25JCQNJC01400), and Young Scientific and Technological Talents (Level Three) in Tianjin.
\section*{Data availability} Data sharing is not applicable to this article as obviously no datasets were generated or analyzed during the current study.
\section*{Conflict of interest} Author states no conflict of interest.

\end{document}